\documentclass{amsart}

\newtheorem{theorem}{Theorem}[section]
\newtheorem{lemma}[theorem]{Lemma}

\theoremstyle{definition}

\newtheorem{proposition}[theorem]{Proposition}
\newtheorem{corollary}[theorem]{Corollary}

\theoremstyle{remark}

\numberwithin{equation}{section}


\newcommand{\interior}[1]{%
  {\kern0pt#1}^{\mathrm{o}}%
}

\usepackage{amssymb}
\usepackage{blindtext}
\usepackage{relsize}
\usepackage{geometry}
 \geometry{
 a4paper,
 total={170mm,257mm},
 left=35mm,
 right=35mm,
 top=30mm,
 }
 \usepackage{hyperref}
 \hypersetup{hidelinks}
 \usepackage[symbol]{footmisc}

\begin{document}

\title[Duality theory and characterizations of optimal solutions for ...]{\textbf{Duality theory and characterizations of optimal solutions for a class of conic linear problems}}

\author[Nick Dimou]{Nick Dimou$^*$}
\footnote[0]{$^*$Prospective student; recently completed undergraduate studies at the National and Kapodistrian University of Athens, Department of Mathematics, Athens, Greece.\par Work was completed at the Game Theory Lab, School of Mathematical Sciences in Tel Aviv University, while a visiting student. Invitation was given by professor Eilon Solan of Tel Aviv University while still being an undergraduate student. All research facilities were provided by the same department. Besides professor Eilon Solan, for this enlightening academic opportunity that he offered and his extremely significant help and guidance, the author would also like to thank Panayotis Mertikopoulos, Alexander Shapiro and Rakesh Vohra for their comments and recommendations.}
\footnote[0]{\textit{email address}: dimou.nikolas@gmail.com}

\subjclass[2020]{90C05, 90C46, 49N15}

\keywords{Duality theory, conic linear programming, optimal solutions,  complementarity conditions, Farkas Lemma, strict feasibility}

\begin{abstract}
For a primal-dual pair of conic linear problems that are described by convex cones $S\subset X$, $T\subset Y$, bilinear symmetric objective functions $\langle\cdot,\cdot\rangle_X$, $\langle\cdot,\cdot\rangle_Y$ and a linear operator $A:X\rightarrow Y$, we show that the existence of optimal solutions $x^*\in S$, $y^*\in T$ that satisfy $Ax^*=b$ and $A^Ty^*=c$ eventually comes down to the consistency and solvability of the problems $min\langle z,z\rangle_Y,\;z\in\{Ax-b:x\in S\}$ and $ min\langle w,w\rangle_X,\; w\in\{A^Ty-c:y\in T\}$. Assuming that these two problems are consistent and solvable, strong duality theorems as well as geometric and algebraic characterizations of optimal solutions are obtained via natural generalizations of the Farkas' Lemma without a closure condition. 
Some applications of the main theory are discussed in the cases of continuous linear programming and linear programming in complex space.
\end{abstract}

\maketitle

\section{Introduction}
Optimization problems of the subsequent form have been profoundly studied and strong duality theorems under different frameworks have been acquired:
\begin{center}
\begin{equation*}{(P')\;\;\;\;\;\;\;\;}
\begin{array}{ll}
    min\;\; \langle c,x\rangle_X\\
    s.t.\;\; Ax-b\in T\\ \;\;\;\;\;\;\;\;x\in S
    \end{array}
\end{equation*}
\end{center}
\begin{center}
\begin{equation*}{(D')\;\;\;\;\;\;\;\;}
\begin{array}{ll}
    max\;\; \langle y,b\rangle_Y \\
    s.t.\;\; -A^Ty+c\in S^*\\ \;\;\;\;\;\;\;\;\;\;\;y\in T^*
    \end{array}
    \end{equation*}
\end{center} 
where the objective functions are usually defined over pairings $(X,X')$ and $(Y,Y')$. Such primal-dual pairs of problems are called conic linear problems when the objective functions are bilinear, $A:X\rightarrow Y$ is a linear map and $S,T$ are convex cones \cite{3}. The above pair of problems is often characterized by many authors as a generalized form of the following primal-dual pair of conic optimization problems to the infinite-dimensional case:
\begin{center}
\begin{equation*}{(P^*)\;\;\;\;\;\;\;\;}
\begin{array}{ll}
    min\;\; \langle c,x\rangle_X\\
    s.t.\;\; Ax=b\\ \;\;\;\;\;\;\;\;x\in S
    \end{array}
\end{equation*}
\end{center}
\begin{center}
\begin{equation*}{(D^*)\;\;\;\;\;\;\;\;}
\begin{array}{ll}
    max\;\; \langle y,b\rangle_Y \\
    s.t.\;\; -A^Ty+c\in S^*\\ \;\;\;\;\;\;\;\;\;\;\;y\in T
    \end{array}
    \end{equation*}
\end{center} 
Here the objecive functions usually represent inner products defined over a product of finite Euclidean spaces and the sets $S,$ $T$ are (polyhedral) convex cones.\par
For problems of both forms that are defined over finite-dimensional vector spaces, a rather complete theory of duality has been developed (see e.g. \cite{5, 12}). The infinite-dimensional case is not yet considered complete, however numerous results regarding strong duality have been obtained under general frameworks and assumptions. These assumptions over the feasible sets are known as \textit{constraint qualifications} (CQs) in the relevant literature, and in comparison to the standard linear programming (LP) case, they are more than often necessary for a zero duality gap to exist. The most well-known, sufficient of these constraint qualifications turns out to be that of \textit{strict feasibility}, or else known as \textit{Slater} CQ (or \textit{Slater's} CQ), which assumes that a feasible solution on the (relative) interior of the corresponding cone exists. In particular the following known result for the pair of problems $\{(P^*),(D^*)\}$ in the finite-dimensional case can be readily found in any textbook of semidefinite programming (for example \cite{5, 16}):\\ \\
\textbf{Theorem.}
    \textit{Suppose that there exists $y^*\in T^*$ such that $-A^Ty^*+c\in int(S^*)$. Then $val(P^*)=val(D^*)$ and there exists optimal feasible $x^*\in S$ such that $Ax^*=b$.}\\

\par

Various CQs have been proved to guarantee a zero duality gap between the primal and dual conic linear problems in the infinite dimensional case; Some early results are given by Kretschmer \cite{8} in 1961, where he treated problems of the form $\{(P'),(D')\}$ defined over paired vector spaces and closed convex cones with respect to the weak topologies through subconsistency and subvalues. A wide review of duality theory in infinite-dimensional linear programming was later given by Anderson \cite{1} and Nash and Anderson \cite{2}. A very interesting connection between conic linear problems and sensitivity analysis was also made by Shapiro \cite{15}. In his work strong duality results were obtained under the subdifferentiality of the optimal value function $v(y):=inf\{\langle c,x\rangle :x\in S,$ $Ax-y\in T\}$ in $b$. Results in a framework and problem setup similar to \cite{8} were recently given by Khanh et al. \cite{7}.\par

However, in all the above, 
strong duality results for the pair of conic linear problems $\{(P'),(D')\}$ are obtained in topological frameworks where a Slater CQ, when it does guarantee strong duality, does not necessarily guarantee the existence of optimal solutions $x^*\in S$, $y^*\in T$ that satisfy $Ax^*=b$ and $A^Ty^*=c$ respectively. In other words, while the complementarity conditions (slackness)
\begin{equation}
    \langle y^*,Ax^*-b\rangle_Y =0\;\;\mbox{and\;\;}  \langle c-A^Ty^*,x^*\rangle_X =0 
\end{equation}
\textit{strongly} hold (that is there exist $x^*\in S$ such that $Ax^*=b$) for the pair $\{(P^*),(D^*)\}$, they do not necessarily \textit{strongly} hold for the primal-dual pair of conic linear problems $\{(P'),(D')\}$.\par

In this paper we show that in a certain class of conic linear problems, under strict feasibility the complementarity conditions (slackness) (1.1) hold, and therefore strong duality between $(P')$ and $(D')$ also holds. In fact we show something stronger; if there exists an optimal solution $x^*$ such that it belongs to int$S$, then whether $Ax^*-b$ belongs to the interior of $T$ or not, there always exists feasible $y^*$ such that $A^Ty^*=c$; and the same goes for a feasible solution of $(D')$ in int$T^*$. That is we show that (1.1) \textit{strongly} holds for both the primal and the dual under some specific conditions.\par 

This particular class of conic linear problems is defined over a more general topological framework than that of conic convex programming problems of the form $(P^*),$ $(D^*)$, since the objective functions are not necessarily inner products (nor continuous), the vector spaces $X$, $Y$ are not necessarily Euclidean spaces (or subsets) and the convex cones $S,$ $T$ are not necessarily closed nor polyhedral. It should be mentioned that conic linear problems treated here belong to the wider class of problems that appear in previous work (\cite{7, 8, 15}) and therefore similar duality results are already known.\par
The key to our approach resides on the fact that the basic properties that constitute this class of conic linear problems are such that the proof of zero duality gap eventually comes down to the requirement that the two subsequent problems are consistent and solvable (that is that they have optimal feasible solutions):\vspace{0.25cm}
\begin{equation}
    min\langle z,z\rangle_Y,\;z\in\{Ax-b:x\in S\}\;\;\;\mbox{and\;\;\;} min\langle w,w\rangle_X,\; w\in\{A^Ty-c:y\in T\}
\end{equation}
In fact, it can be easily shown that (1.2) can be replaced be three other conditions, the alternations that involve the max$\langle z,z\rangle_Y$ and min$\langle w,w\rangle_X$ (that we shall call condition (1.2)a), min$\langle z,z \rangle_Y$ and max$\langle w,w\rangle_X$ (condition (1.2)b), and max$\langle z,z\rangle_Y$ and max$\langle w,w\rangle_X$ (condition (1.2)c), of the objective functions of these two convex sets respectively.\par
Since our aim is to prove the existence of points $x^*$ and $y^*$ such that $Ax^*=b$ and $A^Ty^*=c$ it is only logical that our approach should involve some kind of theorems similar to the Farkas alternative in real space. Indeed, for the proof of the two main results we use simple generalized versions of the Farkas' Lemma which hold without a closure assumption, in comparison to the trivial case of (LP) and other similar linear problems that require a closeness property for strong duality to hold. Instead, these generalized forms of the Farkas' Lemma derive from properties that constitute the class of conic linear problems treated, and from simple extended hyperplane theorems.\par

 The rest of the paper is organized as follows: in Section 2 the framework and class of conic linear problems are defined, the main results are formulated and some basic notations are given. The main section of this paper, Section 3, includes the proofs of the two key theorems as well as some other useful results regarding solutions that satisfy (1.1) and the consistency of some subsets of the feasible sets. In Section 4 some applications of the main theory are discussed in the case of linear programming in complex space. Lastly, in Section 5 we obtain a simple result regarding continuous linear programming problems which can be characterized as complementary to the strong duality results given by Grinold \cite{6} and Levinson \cite{9}.\\ \\

\section{A class of conic linear problems}

In this paper we deal with conic linear problems of the following form:
\begin{center}
\begin{equation*}{(P)\;\;\;\;\;\;\;\;}
\begin{array}{ll}
    min\;\; \langle c,x\rangle_X\\
    s.t.\;\; Ax-b\in T^*\\ \;\;\;\;\;\;\;\;x\in S
    \end{array}
\end{equation*}
\end{center}
\begin{center}
\begin{equation*}{(D)\;\;\;\;\;\;\;\;}
\begin{array}{ll}
    max\;\; \langle y,b\rangle_Y\\
    s.t.\;\; -A^Ty+c\in S^*\\ \;\;\;\;\;\;\;\;\;\;\;y\in T
    \end{array}
    \end{equation*}
\end{center}
\par Similarly to the linear restrictions that appear in conic programming theory $S,T$ are convex cones$^1$\footnote[0]{$^1$ A set $S\subset X$ is a convex cone iff $\forall\lambda\in[0,1]:$ $\lambda S+(1-\lambda)S\subset S$ and $\forall\mu\in[0,\infty):$ $\mu S\subset S$.}, subsets of vector spaces $X$ and $Y$ respectively and $S^*,T^*$ are their respective positive dual cones$^2$\footnote[0]{$^2$ The positive dual cone of $S$ is the convex set $S^*:=\{x^*\in X:\langle x^*,x\rangle_X\geq0\;\forall x\in S\}$.}. The objective functions $\langle\cdot,\cdot\rangle_X$, $\langle\cdot,\cdot\rangle_Y$ defined over $X\times X$ and $Y\times Y$ respectively, are bilinear and symmetric (although symmetry is not always necessary in the following). Here $A:X\rightarrow Y$ is a linear operator and $A^T:Y\rightarrow X$ is its adjoint$^3$\footnote[0]{$^3$ Some authors also denote the adjoint by $A^*$.}, $b\in Y$ and $c\in X$. In addition, the spaces $X,Y$ are not necessarily assumed to be Euclidean or Hilbert spaces, nor do the objective functions represent specifically defined inner products. Alike \cite{15}, the topologies corresponding to the vector spaces (if $X,\;Y$ are equipped with any) are abstract and the space $X$ is ``large" enough, such that the adjoint of the linear operator exists. That is, either the linear mapping $A$ is assumed continuous (usually represented by a matrix function), or space $X$ is ``large" enough such that ``for every $y\in Y$ there exists a unique $x^*\in X$ such that $\langle y,Ax\rangle =\langle x^*,x\rangle$ for every $x\in X$", so that the adjoint can be accordingly defined and $\langle y,Ax\rangle=\langle A^Ty,x\rangle$ for every $x\in S$ and every $y\in T$.\par

A noticable difference between the two structures of the restrictions of dual problems $(P'),(D')$ and $(P),(D)$ lies on the switching of the cones $T$ and $T^*$. Although this alternation might seem confusing to the reader that is familiar with the known work on duality theory of conic programming, it does not affect the gist of the following analysis nor the main purposes of the paper.\\ \par
The class of conic linear problems that is treated in the above form is characterized by the following three assumptions:\\ \\ \\
\textbf{(F1)} Convex cones $S,\;T$ are solid, that is int$S\neq\O$, int$T\neq\O$.\\
\textbf{(F2)} The positive-definiteness condition is met for the objective functions in the sets $(C_{A}-b)\times(C_{A}-b)$, $(D_{A}-c)\times(D_{A}-c)$, where $C_{A}:=\{Ax:x\in X\}\subset Y$ and $D_{A}:=\{A^{T}y:y\in Y\}\subset X$, that is $\langle z,z\rangle_Y > 0$ for every $0\neq z\in C_{A}-b$, $\langle w,w \rangle_X > 0$ for every $0\neq w\in D_{A}-c$.\\
\textbf{(F3)} The objective functions $\langle\cdot,\cdot\rangle_Y$, $\langle\cdot,\cdot\rangle_X$ acquire a minimum value in the sets $(C_A-b)\times(C_A-b)$, $(D_A-c)\times(D_A-c)$ respectively.\\ \\
Assumption \textbf{(F1)} means that we only deal with essential non-trivial conic linear problems. It is also essential for strictly feasible solutions to exist. Assumption \textbf{(F2)} generalizes the case of the objective functions being inner products, and \textbf{(F3)} implies that the closure of the convex cones $S$, $T$ and the continuity of the objective functions are not required.\par

Hypothesis \textbf{(F3)} essentially translates into the significant remark we made on the previous senction, that is that the existence of optimal solutions $x^*\in S$, $y^*\in T$ such that $Ax^*=b$ and $A^Ty^*=c$ eventually comes down to the consistency and solvability of the problems $min\langle z,z\rangle_Y,\;z\in\{Ax-b:x\in S\}$ and $ min\langle w,w\rangle_X,\; w\in\{A^Ty-c:y\in T\}$. In addition, it seems to be the most obscure of the three judging by the fact that the first two are met more than often, compared to \textbf{(F3)}, in the relevant literature. However, the latter is also met under some generic hypotheses and known results in variational analysis (e.g. see \cite{14} Theorem 1.9). Moreover, in the framework of \cite{5} \textbf{(F2)} and \textbf{(F3)} are also true for the closure of the sets $C_A$, $D_A$, therefore it is seen that the case of $S,T$ being closed overlaps already known results in finite (real) Euclidean spaces.\\ \par 

It must be pointed out that, due to the symmetric form of the primal-dual pair and the linear symmetry that appears in the proofs of the generalized forms of the Farkas' Lemma, this last assumption may be replaced by the acquirement of the maximum instead of the minimum, respectively. What this essentially means is that if assumptions \textbf{(F1), (F2)} hold as well as just one of the four different variations of assumption \textbf{(F3)} (conditions (1.2), (1.2)a, (1.2)b, (1.2)c), then the same main results presented here hold. In what follows we only treat with the case where the minimization of the two problems is assumed, in order to avoid repeating results.\par

Before we state the two main results of this paper, we give some basic notations that are used throughout the next sections:\\ \\
\par We denote by $\mathcal{S}(P):=\{x\in S:Ax-b\in T^*\}$ the set of all feasible solutions of the primal problem $(P)$. Respectively,  we denote by $\mathcal{S}(D):=\{y\in T:-A^Ty+c\in S^*\}$ the set of all feasible solutions of the dual problem $(D)$. The optimal value of the primal problem $(P)$ is defined by $v(P):=inf\{\langle c,x\rangle_X :Ax-b\in T^*$, $x\in S\}$ and the optimal value of the dual problem $(D)$ is defined by $v(D):=sup\{\langle y,b\rangle_Y : -A^Ty+c\in S^*$, $y\in T\}$, when of course the respective feasibility sets are non-empty (we define $v(P):=\infty$, $v(D):=-\infty$ if the problems have no feasible solutions). The following weak duality relation holds by the standard minimax duality:
\begin{equation}
    v(P)\geq v(D)
\end{equation}
\par In addition we define the sets $\tilde{\mathcal{S}}(P):=\{x\in\mathcal{S}(P):\langle c,x\rangle_X<\infty\}$ and $\tilde{\mathcal{S}}(D):=\{y\in\mathcal{S}(D):\langle y,b\rangle_Y>-\infty\}$ (we tend to use these sets of feasible solutions in order to avoid ``unsavory" optimization cases). We lastly denote by $\mathcal{S}^*(P)$ and $\mathcal{S}^*(D)$ the sets of all optimal feasible solutions of problems $(P)$ and $(D)$ respectively, that is the sets of all feasible solutions for which the minimum and maximum value is attained respectively.\par
Note that a problem might have an optimal value while not having an optimal solution (the interested reader may look at problem (4.10) in \cite{5}).
\par
We shall also from now on skip the indicators $``X",\;``Y"$ of the objective functions, and we will write $\langle \cdot,\cdot \rangle$ without provoking any confusion regarding the space that this is defined over.\\ \\ 

We now state the two main results that are to be obtained in the next section. Under assumptions \textbf{(F1)-(F3)} the following is true:
\begin{theorem}
If the problems $(P)$, $(D)$ have optimal solutions $x^*$, $y^*$ respectively such that $x^*\in$ int$S$, $y^*\in$ int$T$ and the optimal values are finite, then $v(P)=v(D)$. In fact, there exist optimal solutions $\hat{x}\in S$, $\hat{y}\in T$ of $(P)$, $(D)$ respectively such that $A\hat{x}=b$ and $A^T\hat{y}=c$.\\
\end{theorem}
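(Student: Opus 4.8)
The plan is to reduce the whole statement to showing that the two auxiliary problems in (1.2) attain the value $0$, and to extract strong duality and optimality as a free consequence. First I would record the following reduction. Suppose we have produced $\hat{x}\in S$ with $A\hat{x}=b$ and $\hat{y}\in T$ with $A^{T}\hat{y}=c$. Both points are automatically feasible, since $A\hat{x}-b=0\in T^{*}$ and $-A^{T}\hat{y}+c=0\in S^{*}$, and the adjoint identity yields $\langle c,\hat{x}\rangle=\langle A^{T}\hat{y},\hat{x}\rangle=\langle \hat{y},A\hat{x}\rangle=\langle \hat{y},b\rangle$. Feasibility together with weak duality (2.1) then gives $\langle c,\hat{x}\rangle\geq v(P)\geq v(D)\geq\langle \hat{y},b\rangle=\langle c,\hat{x}\rangle$, so every inequality is an equality; hence $v(P)=v(D)$ and $\hat{x},\hat{y}$ are optimal. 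Thus it suffices to prove the exact-feasibility statements $b\in\{Ax:x\in S\}$ and $c\in\{A^{T}y:y\in T\}$.

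For the primal statement, by \textbf{(F3)} the problem $\min\langle z,z\rangle$ over the convex set $\{Ax-b:x\in S\}$ attains its minimum at some $z^{*}=A\hat{x}-b$ with $\hat{x}\in S$; since $z^{*}\in C_{A}-b$, assumption \textbf{(F2)} makes $z^{*}=0$ equivalent to $\langle z^{*},z^{*}\rangle=0$, so it is enough to show the optimal value is $0$. Assume instead $z^{*}\neq0$. Expanding $\langle z^{*}+t(z-z^{*}),z^{*}+t(z-z^{*})\rangle$ along the segment from $z^{*}$ to any competitor $z=Ax-b$ ($x\in S$) and letting $t\to0^{+}$ gives the variational inequality $\langle z^{*},Ax-b\rangle\geq\langle z^{*},z^{*}\rangle>0$ for all $x\in S$. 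Passing to the adjoint, $\langle A^{T}z^{*},x\rangle\geq\langle z^{*},b\rangle+\langle z^{*},z^{*}\rangle$ for all $x\in S$; substituting $\lambda x$ and using that $S$ is a cone (letting $\lambda\to\infty$ and $\lambda=0$) forces $A^{T}z^{*}\in S^{*}$ and $\langle z^{*},b\rangle<0$. This is exactly the generalized Farkas alternative, established with no closure hypothesis on $S$ and resting only on \textbf{(F2)}, \textbf{(F3)}.

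The main obstacle, and the heart of the argument, is to rule out this alternative; this is where the solidity hypothesis \textbf{(F1)} and the interior optima enter, replacing the closedness/recession conditions used in ordinary linear programming. Using the dual optimum $y^{*}\in\mathrm{int}\,T$, I would consider the perturbation $y^{*}-\epsilon z^{*}$. Because $y^{*}$ is interior, $y^{*}-\epsilon z^{*}\in T$ for all sufficiently small $\epsilon>0$; because $S^{*}$ is a convex cone and both $c-A^{T}y^{*}\in S^{*}$ and $A^{T}z^{*}\in S^{*}$, the point stays dual feasible, as $c-A^{T}(y^{*}-\epsilon z^{*})=(c-A^{T}y^{*})+\epsilon A^{T}z^{*}\in S^{*}$. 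Its objective value $\langle y^{*}-\epsilon z^{*},b\rangle=\langle y^{*},b\rangle-\epsilon\langle z^{*},b\rangle$ strictly exceeds $v(D)=\langle y^{*},b\rangle$, contradicting optimality of $y^{*}$. Hence $z^{*}=0$ and $A\hat{x}=b$. The decisive point is that interiority supplies a genuinely feasible \emph{small} step rather than a recession ray, which is precisely why no closure of the cones is required.

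The dual exact-feasibility $c\in\{A^{T}y:y\in T\}$ follows by the symmetric argument: minimizing $\langle w,w\rangle$ over $\{A^{T}y-c:y\in T\}$ produces, if its minimizer $w^{*}\neq0$, the alternative $Aw^{*}\in T^{*}$ and $\langle w^{*},c\rangle<0$, and then the perturbation $x^{*}+\epsilon w^{*}$ of the primal optimum $x^{*}\in\mathrm{int}\,S$ remains primal feasible (here $(Ax^{*}-b)+\epsilon Aw^{*}\in T^{*}$) while strictly lowering $\langle c,\cdot\rangle$, contradicting optimality of $x^{*}$. Combining the two exact-feasibility statements with the reduction of the first paragraph completes the proof. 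Beyond the sign-bookkeeping in choosing the directions $-z^{*}$ and $+w^{*}$, I expect the only genuinely delicate point to be the justification of the variational inequality and the cone-homogenization in a merely bilinear, possibly discontinuous setting, for which \textbf{(F2)} and \textbf{(F3)} are exactly the properties that keep the quadratic surrogate well-behaved.
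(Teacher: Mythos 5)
Your proposal is correct and takes essentially the same route as the paper: your inline variational inequality and cone-homogenization reproduce the paper's Lemma 3.3, Proposition 3.4 and the generalized Farkas Lemmas (Theorems 3.5/3.6), and your perturbations $y^{*}-\epsilon z^{*}$ and $x^{*}+\epsilon w^{*}$ of the interior optima are just the direct form of the contradiction in Theorems 3.1/3.2, where the paper argues contrapositively via $\lambda=1/n\to 0$ and the closedness of $Y\smallsetminus\mathrm{int}\,T$ --- the same topological fact. Your opening weak-duality sandwich is exactly the content of Corollary 3.7.
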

Now assume the following: \textbf{(F4)} The sets $X\smallsetminus$int$S$ and $Y\smallsetminus$int$T$ are cones (not necessarily convex).\\ \par
Let the sets $\hat{\mathcal{S}}(D):=\{y\in$ int$T:-A^Ty+c\in S^*,-A^Ty\in S^*,\langle y,b\rangle>-\infty\}$, $\hat{\mathcal{S}}(P):=\{x\in$ int$S:Ax-b\in T^*,Ax\in T^*,\langle x,c\rangle<\infty\}$. If \textbf{(F4)} also holds for the above class of conic linear problems, then the following is true:
\begin{theorem}
    If $\hat{\mathcal{S}}(D)\neq\O$, $\hat{\mathcal{S}}(P)\neq\O$, $\tilde{\mathcal{S}}(P)\smallsetminus\hat{\mathcal{S}}(P)\neq\O$, $\tilde{\mathcal{S}}(D)\smallsetminus\hat{\mathcal{S}}(D)\neq\O$ and if  $v(P),\;v(D)$ are finite, then $v(P)=v(D)$. In fact, there exist optimal solutions $\hat{x}\in S$, $\hat{y}\in T$ of $(P)$, $(D)$ respectively such that $A\hat{x}=b$ and $A^T\hat{y}=c$. \\ 
\end{theorem}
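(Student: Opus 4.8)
The plan is to collapse the whole statement onto the single geometric assertion that $b\in\{Ax:x\in S\}$ and $c\in\{A^Ty:y\in T\}$, and then to extract these two memberships from the auxiliary problems (1.2) by a Farkas-type argument. I would first record the reduction, where bilinearity and symmetry do all the work. Suppose we have already produced $\hat x\in S$ with $A\hat x=b$ and $\hat y\in T$ with $A^T\hat y=c$. Since $0\in T^*$ and $0\in S^*$, both points are feasible, and the slackness conditions (1.1) hold trivially. Symmetry and bilinearity then give $\langle c,\hat x\rangle=\langle A^T\hat y,\hat x\rangle=\langle\hat y,A\hat x\rangle=\langle\hat y,b\rangle$, so that $v(P)\le\langle c,\hat x\rangle=\langle\hat y,b\rangle\le v(D)$; combined with the weak duality relation $v(P)\ge v(D)$ this forces $v(P)=v(D)$ and makes $\hat x,\hat y$ optimal. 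Hence the theorem reduces entirely to solving the two problems in (1.2) at optimal value $0$.

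Second, I would produce the Farkas certificate from the primal auxiliary problem. By \textbf{(F3)} the problem $min\langle z,z\rangle$ over $z\in\{Ax-b:x\in S\}$ attains its minimum at some $z^*=Ax_0-b$, and by \textbf{(F2)} this minimum equals $0$ if and only if $z^*=0$, i.e. exactly when $b\in\{Ax:x\in S\}$. Assume toward a contradiction $z^*\ne0$ and put $m:=\langle z^*,z^*\rangle>0$. Since $\{Ax-b:x\in S\}$ is convex and $\langle\cdot,\cdot\rangle$ is a symmetric bilinear form, expanding $\langle z^*+t((Ax-b)-z^*),z^*+t((Ax-b)-z^*)\rangle\ge m$ and letting $t\downarrow0$ gives the variational inequality $\langle z^*,Ax-b\rangle\ge m$ for every $x\in S$. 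As $S$ is a cone, testing with $\lambda x$ for $\lambda>0$ and with $x=0$ converts this into $A^Tz^*\in S^*$ together with $\langle z^*,b\rangle=-m<0$. This is the ``generalized Farkas' Lemma without a closure condition'' announced in the introduction: positive-definiteness \textbf{(F2)} and attainment \textbf{(F3)} stand in for the closedness of $\{Ax-b:x\in S\}$ that the classical alternative would demand.

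Third, I would eliminate this certificate using the strictly feasible interior data and \textbf{(F4)}. The homogeneous membership conditions built into $\hat{\mathcal{S}}(P)$ and $\hat{\mathcal{S}}(D)$ are exactly the engine for this. Given a base point $y_1\in\tilde{\mathcal{S}}(D)\setminus\hat{\mathcal{S}}(D)\subseteq\mathcal{S}(D)$ and an interior direction $\bar y\in\hat{\mathcal{S}}(D)$ with $-A^T\bar y\in S^*$, the ray $y_1+\lambda\bar y$ ($\lambda\ge0$) stays dual feasible: $T$ is a convex cone, so $y_1+\lambda\bar y\in T$, and $(-A^Ty_1+c)+\lambda(-A^T\bar y)\in S^*$. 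Consequently finiteness of $v(D)$ forces $\langle\bar y,b\rangle\le0$ for every $\bar y\in\hat{\mathcal{S}}(D)$, and symmetrically $\langle\bar x,c\rangle\ge0$ for every $\bar x\in\hat{\mathcal{S}}(P)$. On the other hand the certificate $z^*$, read as the direction $-z^*$ with $-A^T(-z^*)=A^Tz^*\in S^*$ and $\langle-z^*,b\rangle=m>0$, is precisely a recession direction of the \emph{opposite} sign; interiority \textbf{(F1)} (a nonzero element of $S^*$ is strictly positive on int$S$) makes the relevant inequalities strict in the nondegenerate case $A^Tz^*\ne0$, and \textbf{(F4)}, the conicity of int$T$ and of its complement, legitimizes the scaling $\lambda\bar y$ and drives the degenerate case $A^Tz^*=0$ (where $\langle z^*,Ax-b\rangle\equiv m$ on $S$) into conflict with $\tilde{\mathcal{S}}(D)\setminus\hat{\mathcal{S}}(D)\ne\emptyset$. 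Reconciling the two sign conclusions yields $z^*=0$, hence $b\in\{Ax:x\in S\}$; the mirror-image argument on $min\langle w,w\rangle$ over $\{A^Ty-c:y\in T\}$, with $S,T$ and $b,c$ interchanged, yields $c\in\{A^Ty:y\in T\}$, and the first paragraph concludes.

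I expect the genuine obstacle to sit in this third step: converting the Farkas certificate into an \emph{admissible} unbounded ray. The certificate only delivers $A^Tz^*\in S^*$ and $\langle z^*,b\rangle<0$; it does not by itself place $-z^*$ inside the cone $T$, and when $A^Tz^*=0$ the separation ceases to be strict. This is exactly why Theorem 2.2 must assume more than Theorem 2.1: the four nonemptiness conditions together with \textbf{(F4)} are arranged so that a bona fide interior recession direction (from $\hat{\mathcal{S}}$) and a bona fide feasible base point (from $\tilde{\mathcal{S}}\setminus\hat{\mathcal{S}}$) are simultaneously available and so that scaling inside the interior is permitted. Verifying that these data really do force the certificate direction into the cone, and that the degenerate case cannot survive, is the delicate part where I would anticipate most of the technical effort.
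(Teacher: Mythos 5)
Your first two steps are correct and coincide with the paper's machinery: the reduction in your opening paragraph (feasibility of $\hat x,\hat y$ from $0\in T^*$, $0\in S^*$, the chain $\langle c,\hat x\rangle=\langle A^T\hat y,\hat x\rangle=\langle\hat y,A\hat x\rangle=\langle\hat y,b\rangle$, weak duality) is exactly how the paper's Corollary 3.11 concludes, and your variational derivation of a certificate with $A^Tz^*\in S^*$ and $\langle z^*,b\rangle<0$ from \textbf{(F2)}--\textbf{(F3)} is the paper's Lemma 3.3 / Proposition 3.4 / Theorem 3.5. The genuine gap sits in your third step, precisely where you locate it yourself, and your sketch aims at the wrong target: the paper never attempts to place the certificate direction inside $T$, and no contradiction follows from ``reconciling the two sign conclusions,'' because $\langle\bar y,b\rangle\le 0$ for $\bar y\in\hat{\mathcal S}(D)$ and $\langle -z^*,b\rangle>0$ concern different vectors and conflict only if some positive multiple of $-z^*$ were known to lie in $\hat{\mathcal S}(D)$ --- which is exactly what cannot be established. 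Your dichotomy on $A^Tz^*\neq 0$ versus $A^Tz^*=0$ also leans on the claim that a nonzero element of $S^*$ is strictly positive on $\mathrm{int}\,S$, which in this abstract setting would require a nondegenerate pairing, whereas \textbf{(F2)} gives positive-definiteness only on $C_A-b$ and $D_A-c$.

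What actually closes the proof (the paper's Theorem 3.8) is a topological argument absent from your proposal. First, a primal feasible point $x$ exists (under the hypotheses of Theorem 2.2, finiteness of $v(P)$ already provides it), and since $Ax-b\in T^*\subset(\mathrm{int}\,T)^*$, the elementary alternative Lemma 3.10, applied to $S$ and $\mathrm{int}\,T$, shows that no certificate can lie in $\mathrm{int}\,T$; so every certificate $\hat y$ lies in $Y\smallsetminus\mathrm{int}\,T$, and by \textbf{(F4)} so do all multiples $n\hat y$. Second, if every such certificate could be pushed into $\mathrm{int}\,T$ by adding some $y\in\tilde{\mathcal S}(D)$, the sum would be dual feasible (the constraint survives because $-A^T\hat y\in S^*$), and applying this to $n\hat y$ with $\langle\hat y,b\rangle>0$ forces $v(D)\to\infty$, a contradiction; hence some certificate satisfies $\hat y+y\in Y\smallsetminus\mathrm{int}\,T$ for \emph{all} $y\in\tilde{\mathcal S}(D)$. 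Finally --- the step with no counterpart in your sketch --- pick $y^*\in\hat{\mathcal S}(D)$: since $-A^Ty^*\in S^*$, every $ny^*$ stays in $\tilde{\mathcal S}(D)$, hence $\hat y+ny^*\in Y\smallsetminus\mathrm{int}\,T$ for all $n$; since $Y\smallsetminus\mathrm{int}\,T$ is a cone one rescales to $\hat y/n+y^*\in Y\smallsetminus\mathrm{int}\,T$, and letting $n\to\infty$, closedness of $Y\smallsetminus\mathrm{int}\,T$ yields $y^*\in Y\smallsetminus\mathrm{int}\,T$, contradicting $y^*\in\mathrm{int}\,T$. So $\hat{\mathcal S}(D)\neq\emptyset$ and \textbf{(F4)} are consumed not as sign information about $b$, but as an interior point that can be scaled within the feasible set and then played off against the closed cone $Y\smallsetminus\mathrm{int}\,T$; without this rescaling-and-limit mechanism your elimination of the certificate does not go through.
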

Even though assumption \textbf{(F4)} ``shortens" the main class that is treated in this paper, it essentially releases us from the necessity of the existence of optimal solutions for both dual problems when specific feasible solutions can be found. The boundeness conditions for the optimal values and the upper and lower boundeness for the values of certain feasible solutions in the formulations of the two results could as well be omitted when their existence is known, while their presence insinuates that a class of functions that take infinite values over the cones $S,T$ is not excluded from this particular framework.\\ 
\par Now, one can straight away identify the geometric and algebraic characterizations that occur from the above results regarding the existence of optimal solutions:\\ \par
The algebraic characterization has a clear explanation as it directly occurs from both Theorems that there exist points $x^*\in S$, $y^*\in T$ that are solutions to the linear ``systems" $Ax=b$, $A^Ty=c$. The geometric characterizations is justified by the converse direction. To be more specific, if there exist optimal solutions of the dual problems with finite optimal values, but the two systems $Ax=b$, $x\in S$ and $A^Ty=c$, $y\in T$ do not have a solution, then every optimal solution of the two problems belongs to the boundary of the corresponding convex cone (when of course such set is non-empty). Same goes with Theorem 2.2., for in the case where the two systems have no feasible solutions, but the optimal values exist and are finite, then the subsets $\hat{\mathcal{S}}(P):=\{x\in$ int$S:Ax-b\in T^*$, $Ax\in T^*,\langle x,c\rangle<\infty\}$, $\hat{\mathcal{S}}(D):=\{y\in$ int$T:-A^Ty+c\in S^*$, $-A^Ty\in S^*,\langle y,b\rangle>-\infty\}$ of feasible solutions are empty. In other words, by knowing that the linear ``systems" $Ax=b$, $A^Ty=c$ have no solutions in $S$ and $T$ respectively, then we instantly know in which ``geometric" place to ``look for" the optimal solutions.\\ \\
\section{Duality theory and characterization of optimal solutions}
In this section the main results of the paper are proved. Theorem 2.1 derives from the following two key theorems, which can be characterized as dual due to their formulation and initial hypothesis.
\begin{theorem}
    If $\mathcal{S}^*(D)\;\cap$ int$T\neq\O$ and $v(D)<+\infty$ then there exists $\hat{x}\in\mathcal{S}(P)$ such that $A\hat{x}=b$.
\end{theorem}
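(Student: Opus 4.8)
The plan is to argue by contradiction and reduce the statement to a Farkas-type alternative that is made available, \emph{without} any closedness assumption on $A(S):=\{Ax:x\in S\}$, by hypotheses \textbf{(F2)} and \textbf{(F3)}. Since $A\hat x=b$ forces $A\hat x-b=0\in T^*$, producing the required $\hat x\in\mathcal{S}(P)$ is the same as showing $b\in A(S)$. So I would suppose, toward a contradiction, that $b\notin A(S)$, i.e. that $0\notin A(S)-b$.

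Next I would invoke \textbf{(F3)} (equivalently, the solvability of the first problem in (1.2)) to pick a minimizer $z^*=Ax_0-b$, with $x_0\in S$, of $\langle z,z\rangle_Y$ over the convex set $A(S)-b$; since $0\notin A(S)-b$ we have $z^*\neq 0$, and \textbf{(F2)} gives $\langle z^*,z^*\rangle_Y>0$. The main step is to extract from the minimality of $z^*$ a separating functional. Because $A(S)-b$ is convex, a purely algebraic first-order argument (expanding $\langle z^*+t(z-z^*),\,z^*+t(z-z^*)\rangle_Y$ and letting $t\downarrow 0$, which needs no topology) yields $\langle z^*,z-z^*\rangle_Y\geq 0$ for all $z\in A(S)-b$. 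Writing $z=Ax-b$ and using the adjoint relation $\langle z^*,Ax\rangle=\langle A^Tz^*,x\rangle$ — valid on all of $Y$ by the standing ``$X$ is large enough'' assumption — this reads $\langle w^*,x-x_0\rangle_X\geq 0$ for all $x\in S$, where $w^*:=A^Tz^*$. Scaling $x_0$ inside the cone $S$ (taking $x=2x_0$ and $x=0$) forces $\langle w^*,x_0\rangle_X=0$, whence $\langle w^*,x\rangle_X\geq 0$ for every $x\in S$, i.e. $w^*\in S^*$; and $\langle w^*,x_0\rangle_X=\langle z^*,Ax_0\rangle_Y=\langle z^*,z^*+b\rangle_Y=0$ gives $\langle z^*,b\rangle_Y=-\langle z^*,z^*\rangle_Y<0$. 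This is exactly the Farkas separator: $A^Tz^*\in S^*$ together with $\langle z^*,b\rangle_Y<0$.

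Finally I would use the separator to contradict the optimality of the given $y^*\in\mathcal{S}^*(D)\cap\mathrm{int}\,T$. Consider the perturbation $y^*-\varepsilon z^*$. Since $y^*\in\mathrm{int}\,T$, it lies in $T$ for all small $\varepsilon>0$; and $-A^T(y^*-\varepsilon z^*)+c=(-A^Ty^*+c)+\varepsilon w^*\in S^*$, because both summands lie in the convex cone $S^*$. Hence $y^*-\varepsilon z^*$ is dual feasible, yet $\langle y^*-\varepsilon z^*,b\rangle_Y=v(D)+\varepsilon\langle z^*,z^*\rangle_Y>v(D)$, contradicting that $v(D)=\langle y^*,b\rangle_Y<+\infty$ is the supremum. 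Therefore $b\in A(S)$, and any $\hat x\in S$ with $A\hat x=b$ lies in $\mathcal{S}(P)$.

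The crux — and the only place where the non-standard hypotheses are indispensable — is producing the separator $z^*$ without assuming $A(S)$ closed: in the classical LP/conic setting one separates $b$ from the \emph{closed} cone $A(S)$, but here closedness may fail, and \textbf{(F2)}--\textbf{(F3)} are precisely what supply an attained, strictly positive ``nearest point'' $z^*$ whose projection inequality plays the role of the separating hyperplane. I would also take care that the adjoint is being applied to the possibly infeasible vector $z^*$ (covered by the assumption on $X$) and that it is the interiority $y^*\in\mathrm{int}\,T$, not mere membership in $T$, that keeps the perturbation feasible.
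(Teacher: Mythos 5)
Your proposal is correct and follows essentially the same route as the paper: you inline the paper's Lemma 3.3, Proposition 3.4 and Theorem 3.5 (the minimizer of $\langle z,z\rangle$ over $A(S)-b$, guaranteed by \textbf{(F2)}--\textbf{(F3)}, yielding the Farkas separator $z^*$ with $A^Tz^*\in S^*$ and $\langle z^*,b\rangle<0$), and then contradict optimality of $y^*$ by perturbing along the separator direction exactly as in the paper's proof of Theorem 3.1. The only cosmetic differences are that you extract the sharper complementarity identity $\langle z^*,b\rangle=-\langle z^*,z^*\rangle$, and you use interiority of $y^*$ directly (the segment $y^*-\varepsilon z^*$ stays in $T$ for small $\varepsilon$) where the paper argues the contrapositive via closedness of $Y\smallsetminus\mathrm{int}\,T$ and a $\lambda=1/n$ limit --- equivalent statements under the same implicit continuity of the vector operations.
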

\begin{theorem}
    If $\mathcal{S}^*(P)\;\cap$ int$S\neq\O$ and $v(P)>-\infty$ then there exists $\hat{y}\in\mathcal{S}(D)$ such that $A^T\hat{y}=c$.
\end{theorem}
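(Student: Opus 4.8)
The plan is to reduce the statement to producing a single point $\hat y\in T$ with $A^T\hat y=c$: for any such $\hat y$ one has $-A^T\hat y+c=0\in S^*$ automatically, so $\hat y\in\mathcal{S}(D)$ and we are done. The whole question is therefore the solvability of the system $A^Ty=c$, $y\in T$, and the device for attacking it without a closure assumption is the minimization problem $\min\langle w,w\rangle$ over $w\in\{A^Ty-c:y\in T\}$ appearing in (1.2). By \textbf{(F3)} (in the guise of the second problem of (1.2)) this minimum is attained at some $y_0\in T$; set $w_0:=A^Ty_0-c\in D_A-c$. The entire proof then consists in showing $w_0=0$.

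First I would read off the first-order information carried by the minimizer $y_0$. Since $T$ is a convex cone it is closed under addition and nonnegative scaling, so $y_0+ty\in T$ for all $y\in T$, $t\ge0$, and $sy_0\in T$ for $s\ge0$. Expanding $\langle A^T(y_0+ty)-c,\,A^T(y_0+ty)-c\rangle-\langle w_0,w_0\rangle=2t\langle w_0,A^Ty\rangle+t^2\langle A^Ty,A^Ty\rangle\ge0$, dividing by $t$ and letting $t\to0^+$, gives $\langle w_0,A^Ty\rangle\ge0$ for every $y\in T$; by symmetry of the bilinear form and the adjoint identity $\langle w_0,A^Ty\rangle=\langle Aw_0,y\rangle$, so this says exactly $Aw_0\in T^*$. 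Scaling along the ray $s\mapsto sy_0$, whose minimum over $s\ge0$ is attained at the interior point $s=1$, forces $\langle A^Ty_0,w_0\rangle=0$, and since $c=A^Ty_0-w_0$ this yields the value identity $\langle c,w_0\rangle=-\langle w_0,w_0\rangle$. These two relations are the concrete content of the generalized Farkas' Lemma in this setting, derived with no closedness of $T$ or of $A^TT$.

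For the contradiction I would now bring in the interior optimal primal solution $x^*$. Assume $w_0\neq0$. Then \textbf{(F2)}, applied on $D_A-c$, gives $\langle w_0,w_0\rangle>0$, hence $\langle c,w_0\rangle<0$, so $w_0$ is a strict descent direction for the primal objective. The perturbed point $x^*+tw_0$ is primal feasible for all small $t>0$: it lies in $S$ because $x^*\in$ int$S$, and $A(x^*+tw_0)-b=(Ax^*-b)+t\,Aw_0$ lies in $T^*$ because $Ax^*-b\in T^*$ (feasibility of $x^*$), $Aw_0\in T^*$, and $T^*$ is a convex cone. But $\langle c,x^*+tw_0\rangle=\langle c,x^*\rangle+t\langle c,w_0\rangle<\langle c,x^*\rangle=v(P)$ then contradicts optimality of $x^*$, the hypothesis $v(P)>-\infty$ ensuring that $\langle c,x^*\rangle$ is a finite real number so that the strict inequality is meaningful. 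Therefore $w_0=0$, i.e.\ $A^Ty_0=c$, and $\hat y:=y_0$ is the desired dual feasible point.

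The step I expect to be the genuine obstacle is the passage establishing $Aw_0\in T^*$ and the value identity directly from solvability of the minimization \emph{over the cone} $T$: this is precisely where the missing closure condition must be replaced by \textbf{(F2)}--\textbf{(F3)}, and where the discrepancy between \textbf{(F3)} as literally stated (a minimum over $D_A-c$, i.e.\ over $y\in Y$) and the $T$-restricted problem of (1.2) actually used must be reconciled. Everything downstream is routine, resting only on $x^*\in$ int$S$ and on $T^*$ being closed under addition. As a sanity check, I would also note that Theorem 3.2 can be obtained from Theorem 3.1 through the symmetry substitution $X\leftrightarrow Y$, $S\leftrightarrow T$, $A\mapsto-A^T$, $b\mapsto-c$, $c\mapsto-b$, which carries $(D)$ into a problem of the type $(P)$ and preserves \textbf{(F1)}--\textbf{(F3)}; the direct argument above is preferable only because it exhibits the mechanism explicitly.
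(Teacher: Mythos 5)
Your proposal is correct, and in substance it retraces the paper's route while repackaging it. The paper derives Theorem 3.2 from its 2nd generalized Farkas Lemma (Theorem 3.6), which is itself built from Lemma 3.3 and Proposition 3.4 --- i.e., precisely from the \textbf{(F3)}-minimizer of $\langle w,w\rangle$ on the shifted image cone --- and then combines an abstract certificate $\hat x$ (with $A\hat x\in T^*$, $\langle\hat x,c\rangle<0$) with an optimal $x^*\in\mathcal{S}^*(P)\cap\mathrm{int}\,S$. Your $w_0$ is exactly that certificate, produced inline: your first-order expansion at $y_0$ reproduces Lemma 3.3's computation together with the cone-scaling step in the proof of Theorem 3.5/3.6, and your identity $\langle c,w_0\rangle=-\langle w_0,w_0\rangle$ is a quantitative sharpening of Proposition 3.4's conclusion $\langle\alpha,b\rangle<0$ (obtained there by testing against $0\in C$). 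The one genuine divergence is the endgame: the paper runs a two-case dichotomy over all certificates and, in the residual case, sends $\lambda\to0$ against the closedness of the complement of the interior, whereas you use the single concrete certificate $w_0$ and the openness of $\mathrm{int}\,S$ to get feasibility of $x^*+tw_0$ for small $t>0$ directly; this is simpler, shows the paper's dichotomy is dispensable, and costs nothing, since both versions rest on the same implicit topological-vector-space hypothesis (continuity of $t\mapsto x^*+tw_0$). The discrepancy you flag between \textbf{(F3)} as literally stated (attainment over $D_A-c$ with $D_A=\{A^Ty:y\in Y\}$) and the $T$-restricted problem (1.2) is real, but it is the paper's own inconsistency, not a gap you introduced: the paper's proof of Theorem 3.5 already applies Proposition 3.4 to $C_A=\{Ax:x\in S\}$ although \textbf{(F2)} defines $C_A$ over $X$, so the restricted reading you adopt is the intended one. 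Two minor blemishes you share with the paper: letting $t\to0^+$ in the first-order condition tacitly assumes $\langle A^Ty,A^Ty\rangle<\infty$ (Lemma 3.3 makes the same tacit step), and $v(P)>-\infty$ guarantees $\langle c,x^*\rangle>-\infty$ but not, by itself, $\langle c,x^*\rangle<+\infty$ --- the exact mirror of how $v(D)<\infty$ functions in the paper's proof of Theorem 3.1. Your closing observation that Theorem 3.2 also follows from Theorem 3.1 under the substitution $X\leftrightarrow Y$, $S\leftrightarrow T$, $A\mapsto-A^T$, $b\mapsto-c$, $c\mapsto-b$ is sound and is essentially what the paper means by ``completely analogous.''
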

Here we only prove Theorem 3.1; the proof of Theorem 3.2 is completely analogous, as are the proofs of the lemmas that are used in the following analysis.\par
In what follows we only deal with the class of conic linear problems of Section 2, that is the convex cones $S$, $T$ and the objective functions satisfy assumptions \textbf{(F1)-(F3)}.\\ 
\begin{lemma}
    Let $C$ be a convex subset of $X$ such that the positive-definiteness condition is met for the objective function over $C-b$ for some $b\in X\smallsetminus C$ and let $c\in C$. Then the following are equivalent:\\
    \begin{itemize}
        \item[(i)] $\langle c-b,c-b\rangle\leq\langle x,x\rangle$ $\forall x\in C-b$
        \item[(ii)] $\langle c-b,x-c\rangle\geq 0$ $\forall x\in C$
    \end{itemize}
\end{lemma}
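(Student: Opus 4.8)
The plan is to read this as the variational-inequality (first-order optimality) characterization of the nearest point of $C$ to $b$ in the quadratic form $\langle\cdot,\cdot\rangle$: condition (i) asserts that $c-b$ minimizes $z\mapsto\langle z,z\rangle$ over $C-b$, while (ii) says that no feasible direction out of $c$ lowers this quadratic to first order. Both implications will be squeezed out of a single algebraic engine. Writing $u:=c-b\in C-b$, and for a fixed $x\in C$ putting $d:=x-c$, convexity of $C$ gives $c+t(x-c)\in C$ and hence $u+td=(c-b)+t(x-c)\in C-b$ for every $t\in[0,1]$. Bilinearity and symmetry then yield
\begin{equation*}
\langle u+td,\,u+td\rangle=\langle u,u\rangle+2t\langle u,d\rangle+t^{2}\langle d,d\rangle .
\end{equation*}
Everything below is extracted from this identity.

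For the implication (i)$\Rightarrow$(ii) I would substitute the admissible point $u+td\in C-b$ into (i), cancel the common term $\langle u,u\rangle$, and obtain $0\le 2t\langle u,d\rangle+t^{2}\langle d,d\rangle$ for every $t\in(0,1]$. Dividing by $t$ and letting $t\to 0^{+}$ annihilates the quadratic remainder and leaves $\langle u,d\rangle=\langle c-b,x-c\rangle\ge 0$, which is exactly (ii). This half is a pure limiting argument: it uses neither positive-definiteness nor any closure property (the remainder $t\langle d,d\rangle$ vanishes simply because $\langle d,d\rangle$ is a finite scalar), and it is the easy direction.

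For the implication (ii)$\Rightarrow$(i) I would instead set $t=1$, i.e. write $x-b=u+d$ and expand
\begin{equation*}
\langle x-b,\,x-b\rangle=\langle u,u\rangle+2\langle u,d\rangle+\langle d,d\rangle .
\end{equation*}
Hypothesis (ii) makes the cross term $2\langle u,d\rangle$ nonnegative, so (i) follows \emph{the instant} the quadratic remainder $\langle d,d\rangle=\langle x-c,x-c\rangle$ is shown to be nonnegative. This is precisely where the positive-definiteness assumption must be spent, and I expect it to be the main obstacle: $x-c$ is a difference of two elements of $C$ and need not itself belong to $C-b$, so the positivity hypothesis over $C-b$ cannot be applied to it verbatim. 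The point to exploit is that in the present framework the relevant sets carry linear structure — in \textbf{(F2)} the sets are $C_{A}-b$ and $D_{A}-c$ with $C_{A},D_{A}$ the \emph{ranges} of linear maps, hence subspaces closed under differences and scaling. Along such a direction $w=x-c$ one has $\lambda w-b\in C-b$ for all $\lambda$, and positivity of $\langle\lambda w-b,\lambda w-b\rangle=\lambda^{2}\langle w,w\rangle-2\lambda\langle w,b\rangle+\langle b,b\rangle$ for arbitrarily large $\lambda$ forces $\langle w,w\rangle\ge 0$; this is the rigorous way to discharge the obstacle. Once $\langle x-c,x-c\rangle\ge 0$ is in hand, combining it with the cross-term bound from (ii) gives $\langle x-b,x-b\rangle\ge\langle c-b,c-b\rangle$ for every $x\in C$, which is (i) after recalling that every element of $C-b$ is of the form $x-b$. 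The only genuine work is making this nonnegativity precise; both implications otherwise collapse onto the single expansion above.
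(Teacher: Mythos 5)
Your proof is correct, and both directions run on exactly the quadratic expansion the paper itself uses: your (i)$\Rightarrow$(ii) is verbatim the paper's argument (the paper takes $\lambda=1/n\to 0$ where you take $t\to 0^{+}$), and your (ii)$\Rightarrow$(i) is the same $t=1$ expansion. The genuine difference is at precisely the point you flagged. The paper's proof of (ii)$\Rightarrow$(i) disposes of the remainder in one stroke, asserting $\langle x-b,x-b\rangle-\langle c-b,c-b\rangle=\langle x-c,x-c\rangle+2\langle c-b,x-c\rangle\geq 0$ ``by (ii) and the positive-definiteness condition''---that is, it applies positive-definiteness to $x-c$, which lies in $C-C$, not in $C-b$, so as written this step is not covered by the lemma's stated hypothesis. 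Your scaling argument is exactly the missing justification: for $w=x-c$ one has $\lambda w-b\in C-b$ for all $\lambda>0$ whenever $C$ has the linear structure of \textbf{(F2)}, and positivity of $\lambda^{2}\langle w,w\rangle-2\lambda\langle w,b\rangle+\langle b,b\rangle$ for arbitrarily large $\lambda$ forces $\langle w,w\rangle\geq 0$ (note you only need this for all sufficiently large $\lambda$, so the possibility $\lambda w=b$ at a single value of $\lambda$ is harmless). This repair is available in every use the paper makes of the lemma: in Theorem 3.5 the set $C$ is $\{Ax:x\in S\}$, its differences lie in the range $\{Ax:x\in X\}$, and \textbf{(F2)} gives positivity on that range minus $b$. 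Your caution is not pedantic, since for a general convex $C$ with positive-definiteness only on $C-b$ the implication (ii)$\Rightarrow$(i) can genuinely fail: in $\mathbb{R}^{2}$ with $\langle u,v\rangle=u_{1}v_{1}-u_{2}v_{2}$, $b=(0,0)$, and $C$ the segment from $c=(1,0)$ to $x=(1.1,1)$, every $z=(1+0.1t,t)\in C-b$ satisfies $\langle z,z\rangle=1+0.2t-0.99t^{2}>0$ on $[0,1]$, and (ii) holds because $\langle c-b,\,x'-c\rangle=0.1\,t\geq 0$ along the segment, yet $\langle x-b,x-b\rangle=0.21<1=\langle c-b,c-b\rangle$. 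In short, your proof is the paper's argument made airtight, at the cost of reading the lemma's positivity hypothesis in the stronger form that \textbf{(F2)} actually supplies.
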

\begin{proof}
    (i)$\Rightarrow$(ii): Let $x\in C$ and 
    $\lambda\in (0,1)$, then $c+\lambda(x-c)\in C$. We now have:
    \begin{equation*}
        \langle c-b+\lambda(x-c),c-b+\lambda(x-c)\rangle - \langle c-b,c-b\rangle = 2\lambda\langle c-b,x-c\rangle + \lambda^2\langle x-c,x-c\rangle
    \end{equation*}
    due to the linearity and symmetry of $\langle\cdot,\cdot\rangle$. By (i) $2\langle c-b,x-c\rangle + \lambda\langle x-c,x-c\rangle\geq 0$, hence for $\lambda=1/n$, $n\in\mathbb{N}$ we obtain (ii) as $\lambda\rightarrow 0$.\par
    (ii)$\Rightarrow$(i): Let $x\in C$. Then
    \begin{equation*}
        \langle x-b,x-b\rangle - \langle c-b,c-b\rangle = \langle x-c,x-c\rangle + 2\langle c-b,x-c\rangle\Rightarrow\langle x-b,x-b\rangle - \langle c-b,c-b\rangle\geq 0
    \end{equation*}
    by (ii) and the positive-definiteness condition.\\
\end{proof}
\begin{proposition}
    Let $C$ be a convex subset of $X$ and let $b\in X\smallsetminus C$ such that the positive-definiteness condition is met for the objective function over $C-b$. If $\langle\cdot,\cdot\rangle$ acquires a minimum value on $C-b$ then there exists $\alpha\in X$ such that $\langle\alpha,b\rangle< \langle\alpha,x\rangle$ $\forall x\in C$. In fact $\alpha\in C-b$.
\end{proposition}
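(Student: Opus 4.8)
The plan is to exhibit the separating vector $\alpha$ explicitly as the displacement of the minimizer from $b$, and then to convert minimality into a variational inequality via Lemma~3.3. First I would invoke the standing hypothesis that $\langle\cdot,\cdot\rangle$ acquires a minimum on $C-b$: choose $c\in C$ at which this minimum is attained, so that $\langle c-b,c-b\rangle\le\langle z,z\rangle$ for every $z\in C-b$. Writing $z=x-b$ with $x\in C$, this is exactly condition (i) of Lemma~3.3. Since the positive-definiteness condition holds over $C-b$ by assumption, Lemma~3.3 applies and delivers condition (ii), namely $\langle c-b,x-c\rangle\ge 0$ for all $x\in C$.

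Next I would set $\alpha:=c-b$. Because $c\in C$, we immediately get $\alpha\in C-b$, which is the refinement ``in fact $\alpha\in C-b$'' claimed in the statement. The crucial point is the strictness: since $b\in X\smallsetminus C$ we have $c\neq b$, hence $\alpha\neq 0$, and positive-definiteness over $C-b$ then gives $\langle\alpha,\alpha\rangle>0$. This is the one place where the hypotheses $b\notin C$ and \textbf{(F2)}-type positive-definiteness are used in tandem, and it is precisely what upgrades the weak inequality of Lemma~3.3(ii) to the strict separation demanded by the conclusion; I expect this to be the only genuinely load-bearing step, the rest being a routine bilinear manipulation.

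Finally, for an arbitrary $x\in C$ I would use bilinearity together with the decomposition $x-b=(x-c)+(c-b)$ to write $\langle\alpha,x\rangle-\langle\alpha,b\rangle=\langle c-b,x-b\rangle=\langle c-b,x-c\rangle+\langle c-b,c-b\rangle$. The first summand is nonnegative by condition (ii), while the second equals $\langle\alpha,\alpha\rangle>0$; adding them yields $\langle\alpha,x\rangle-\langle\alpha,b\rangle>0$, that is $\langle\alpha,b\rangle<\langle\alpha,x\rangle$ for every $x\in C$. This simultaneously establishes the separation inequality and the membership $\alpha\in C-b$, completing the proposition. No closure or continuity hypothesis enters; everything rests on the attainment of the minimum and on positive-definiteness, which is exactly the mechanism by which the generalized Farkas-type separation is obtained without a closure condition.
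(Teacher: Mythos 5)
Your proof is correct and follows essentially the same route as the paper's: both take $\alpha=c-b$ for a minimizer $c$, invoke Lemma~3.3 to obtain the variational inequality $\langle c-b,x-c\rangle\ge 0$, and use $b\notin C$ together with positive-definiteness to get the strict term $\langle c-b,c-b\rangle>0$. The only difference is cosmetic bookkeeping --- you sum the two terms of the decomposition $x-b=(x-c)+(c-b)$, whereas the paper chains the inequalities $\langle\gamma-b,x\rangle\ge\langle\gamma-b,\gamma\rangle>\langle\gamma-b,b\rangle$.
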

\begin{proof}
Let $\gamma\in C$ such that $\langle\gamma-b,\gamma-b\rangle\leq\langle x-b,x-b\rangle$ $\forall x\in C$. Then by Lemma 3.3 and the positive-definiteness condition:
\begin{equation*}
    \langle\gamma-b,x\rangle\geq\langle\gamma-b,\gamma\rangle > \langle\gamma-b,b\rangle
\end{equation*}
since $\gamma\neq b$. The inequality $\langle\alpha,b\rangle< \langle\alpha,x\rangle$ holds for $\alpha:=\gamma-b\in C-b$.\\
\end{proof}
We now obtain the following generalized form of Farkas' Lemma:\\
\begin{theorem}(1st generalized form of Farkas' Lemma)
    Let $S\subset X$ be a convex cone, $S^*$ its positive dual and $b\in Y$. Then exactly one of the following problems has at least one solution:\\
\begin{equation*}
(I) \left. \begin{array}{ll}
         Ax=b\\
        x\in S \end{array} \right., \;\;\;
(II) \left. \begin{array}{ll}
         (-A)^Ty\in S^*\\
        \langle y,-b\rangle <0\\
        y\in Y \end{array} \right.
        \end{equation*}
\end{theorem}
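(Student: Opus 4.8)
The plan is to prove the Farkas alternative by the standard two-part argument: first show that the two systems cannot simultaneously have solutions (the ``easy'' direction), and then show that if system $(I)$ is infeasible, system $(II)$ must have a solution (the ``separation'' direction), which is where Proposition 3.4 does the essential work.

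For the \emph{mutual exclusivity}, suppose toward contradiction that $x\in S$ solves $Ax=b$ and that $y\in Y$ solves $(II)$. Then on one hand $\langle y,-b\rangle<0$, i.e. $\langle y,b\rangle>0$. On the other hand, using the adjoint relation $\langle y,Ax\rangle=\langle A^Ty,x\rangle$ and $Ax=b$, I would compute $\langle y,b\rangle=\langle y,Ax\rangle=\langle A^Ty,x\rangle=-\langle(-A)^Ty,x\rangle$. Since $(-A)^Ty\in S^*$ and $x\in S$, the definition of the positive dual cone gives $\langle(-A)^Ty,x\rangle\geq 0$, hence $\langle y,b\rangle\leq 0$, contradicting $\langle y,b\rangle>0$. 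This handles the ``at most one'' half.

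For the \emph{existence} direction, assume $(I)$ has no solution; equivalently $b\notin C$, where $C:=C_A=\{Ax:x\in X\}$ is restricted to $\{Ax : x\in S\}=:AS$. I would set $C:=AS\subset Y$, which is a convex subset of $Y$ (being the linear image of the convex cone $S$), and observe that $b\in Y\smallsetminus C$ precisely because $(I)$ is infeasible. Assumption \textbf{(F2)} supplies the positive-definiteness of $\langle\cdot,\cdot\rangle_Y$ on $C_A-b\supseteq AS-b$, and assumption \textbf{(F3)} supplies that $\langle\cdot,\cdot\rangle_Y$ attains its minimum on $C_A-b$; I would need the minimum to be attained on $AS-b$ specifically, which follows since the hypotheses of the problem class are tailored to exactly the set appearing in (1.2). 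Then Proposition 3.4 yields $\alpha\in Y$ with $\langle\alpha,b\rangle<\langle\alpha,z\rangle$ for all $z\in C=AS$, and in fact $\alpha\in AS-b$.

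The remaining task is to convert this separating vector $\alpha$ into a solution $y$ of $(II)$. The natural candidate is $y:=\alpha$. Using $z=Ax$ for $x\in S$, the inequality $\langle\alpha,Ax\rangle>\langle\alpha,b\rangle$ holds for every $x\in S$; rewriting via the adjoint gives $\langle A^T\alpha,x\rangle>\langle\alpha,b\rangle$ for all $x\in S$. Because $S$ is a cone, scaling $x$ by $\mu\to\infty$ forces $\langle A^T\alpha,x\rangle\geq 0$ for all $x\in S$ (otherwise the left side could be driven to $-\infty$), which says $A^T\alpha\in S^*$, equivalently $(-A)^Ty\in S^*$ for $y:=-\alpha$ after adjusting signs to match the statement. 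Taking $x=0\in S$ in the strict inequality gives $0>\langle\alpha,b\rangle$, i.e. $\langle\alpha,-b\rangle<0$, which is the sign condition in $(II)$. The main obstacle I anticipate is bookkeeping the signs so that the same vector simultaneously delivers $(-A)^Ty\in S^*$ and $\langle y,-b\rangle<0$; the cone-scaling argument that upgrades the strict separating inequality to membership in $S^*$ is the one genuinely nontrivial step, and it is exactly here that solidity and the conic structure, rather than mere convexity, are used.
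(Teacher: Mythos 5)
Your proposal is correct and follows essentially the same route as the paper: the exclusivity half via the adjoint identity and the definition of $S^*$, and the existence half by applying Proposition 3.4 to the convex cone $\{Ax:x\in S\}$ (the paper applies it to $-C_A$ with the point $-b$, so that $\alpha$ itself solves $(II)$, whereas your candidate must be $y:=-\alpha$ --- note the harmless slip where you write $\langle\alpha,-b\rangle<0$; the correct reading is $\langle\alpha,b\rangle<0$, i.e.\ $\langle y,-b\rangle<0$ for $y=-\alpha$), followed by the identical cone-scaling argument and evaluation at $x=0$. Two marginal remarks: your closing claim that solidity is used is slightly off, since only the conic structure of $S$ enters Theorem 3.5 and assumption \textbf{(F1)} plays no role there, while your observation that the minimum in \textbf{(F3)} must be attained over $\{Ax-b:x\in S\}$ (rather than over $\{Ax:x\in X\}-b$ as \textbf{(F2)} literally states) correctly resolves the paper's own notational inconsistency in the direction its proof actually uses.
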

\begin{proof}
    Suppose that both $(I)$ and $(II)$ have solutions. Let $\hat{x}\in S$ be a solution of $(I)$ and $\hat{y}\in Y$ be a solution of $(II)$. Then:
    \begin{equation*}
        (-A)^T\hat{y}\in S^*\Rightarrow\langle (-A)^T\hat{y},x\rangle\geq0\;\;\; \forall x\in S\Rightarrow \langle (-A)^T\hat{y},\hat{x}\rangle\geq 0\Rightarrow\langle \hat{y},-A\hat{x}\rangle\geq 0\Rightarrow \langle\hat{y},-b\rangle\geq 0
    \end{equation*}
    which is a contradiction.\par
    Now assume that $(I)$ does not have a solution. Then $b\not\in C_A=\{Ax:x\in S\}\subset Y$. Since $S$ is a convex cone and $A$ is linear, the set $C_A$ is also a convex cone. Therefore Proposition 3.4 can be applied for $C_{A}$ in space $Y$ as \textbf{(F2)} and \textbf{(F3)} hold; there exists $\alpha\in -C_A+b$ such that $\langle\alpha,-b\rangle<\langle\alpha,-Ax\rangle$ $\forall x\in S$. Let $x\in S$, then $\lambda x\in S$ $\;\forall\lambda\in(0,\infty)$ hence $\langle\alpha,-b\rangle/\lambda<\langle\alpha,-Ax\rangle$. Since $x\in S$ is arbitrary, for $\lambda=n\in\mathbb{N}$ we obtain
    \begin{equation*}
        \langle\alpha,-Ax\rangle\geq 0\;\; \forall x\in S\Rightarrow\langle -A^T\alpha,x\rangle\geq 0\;\; \forall x\in S\Rightarrow -A^T\alpha\in S^*
    \end{equation*}
    The inequality $\langle\alpha,-b\rangle<0$ holds by definition ($0\in S$), therefore $\alpha\in -C_A+b\subset Y$ is a solution of $(II)$.\\  
\end{proof}
The proof of Theorem 3.1 now follows:
\begin{proof}\textit{(Theorem 3.1)} Suppose by contradiction that a feasible solution $x\in S$ of $(P)$ such that $Ax=b$ does not exist. By Theorem 3.5 the following problem has a solution:
\begin{equation*}
    (II^*) \left. \begin{array}{ll}
         (-A)^Ty\in S^*\\
        \langle y,-b\rangle <0\\
        y\in Y \end{array} \right.
\end{equation*}
Suppose that for every solution $\hat{y}$ of $(II^*)$ there exists $(y^*,\lambda)\in(\mathcal{S}^*(D)\;\cap$\;int$T)\times(0,\infty)$ such that $y^*+\lambda\hat{y}\in$ int$T$. Then $y^*+\lambda\hat{y}\in\mathcal{S}(D)$.  Indeed:
\begin{equation*}
    \langle-A^Ty^*+c,x\rangle\geq 0 \mbox{ and}\;\langle-A^T\hat{y},x\rangle\geq0\;\;\forall x\in S\Rightarrow \langle -A^T(y^*+\lambda\hat{y})+c,x\rangle\geq 0\;\;\forall x\in S
\end{equation*}
Then $\langle y^*+\lambda\hat{y},b\rangle=\langle y^*,b\rangle+\lambda\langle\hat{y},b\rangle>\langle y^*,b\rangle$, which is a contradiction since $y^*$ is optimal and $v(D)<\infty$.\par
Therefore there exists a solution $\hat{y}$ of $(II^*)$ such that $\forall (y^*,\lambda)\in(\mathcal{S}^*(D)\;\cap$ int$T)\times(0,\infty)$ $y^*+\lambda\hat{y}\in Y\smallsetminus$int$T$. For $\lambda=1/n$, $n\in\mathbb{N}$ we obtain $y^*\in Y\smallsetminus$int$T$ for every $y^*\in\mathcal{S}^*(D)\;\cap$ int$T$ as $\lambda\rightarrow 0$ since $Y\smallsetminus$int$T$ is closed (with respect to the corresponding topology of $Y$).\\
\end{proof}
The proof of Theorem 3.2 is completely analogous to the above, and derives from the generalized form of the Farkas' Lemma, which is similarly formulated and proved in the topological vector space $Y$ as follows:\\
\begin{theorem}(2nd generalized form of Farkas' Lemma) Let $T\subset Y$ be a convex cone, $T^*$ its positive dual and $c\in X$. Then exactly one of the following problems has at least one solution:\\
\begin{equation*}
(I') \left. \begin{array}{ll}
         A^Ty=c\\
        y\in T \end{array} \right., \;\;\;
(II') \left. \begin{array}{ll}
         Ax\in T^*\\
        \langle x,c\rangle <0\\
        x\in X \end{array} \right.
        \end{equation*}
\end{theorem}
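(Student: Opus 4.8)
The plan is to transpose the proof of Theorem 3.5 to the space $X$, replacing the cone $C_A$ by $D_A:=\{A^Ty:y\in T\}\subset X$ and the point $b$ by $c$, with Proposition 3.4 again serving as the separating device. As before I would prove the dichotomy in two halves: that $(I')$ and $(II')$ cannot both be solvable, and that the insolvability of $(I')$ forces $(II')$ to have a solution. Assumptions \textbf{(F2)} and \textbf{(F3)}, read off on the set $D_A-c$, are precisely what is needed to apply Proposition 3.4 in $X$.

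For the ``not both'' direction, suppose $\hat{y}\in T$ solves $(I')$, so $A^T\hat{y}=c$, and $\hat{x}\in X$ solves $(II')$, so $A\hat{x}\in T^*$ and $\langle\hat{x},c\rangle<0$. From $A\hat{x}\in T^*$ I have $\langle A\hat{x},y\rangle\geq0$ for every $y\in T$; taking $y=\hat{y}$ and using symmetry together with the adjoint identity $\langle\hat{y},A\hat{x}\rangle=\langle A^T\hat{y},\hat{x}\rangle$ gives
\begin{equation*}
0\leq\langle A\hat{x},\hat{y}\rangle=\langle A^T\hat{y},\hat{x}\rangle=\langle c,\hat{x}\rangle=\langle\hat{x},c\rangle,
\end{equation*}
contradicting $\langle\hat{x},c\rangle<0$. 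This is routine bookkeeping with the bilinear forms.

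For the ``at least one'' direction, assume $(I')$ has no solution, so $c\notin D_A$. Since $T$ is a convex cone and $A^T$ is linear, $D_A$ is a convex cone, and \textbf{(F2)}, \textbf{(F3)} hold on $D_A-c$; hence Proposition 3.4 applies with $C=D_A$ and the separated point $c$, yielding some $\beta\in D_A-c$ with $\langle\beta,c\rangle<\langle\beta,A^Ty\rangle$ for every $y\in T$. Taking $y=0\in T$ already delivers $\langle\beta,c\rangle<0$. To obtain $A\beta\in T^*$ I would exploit the cone structure exactly as in Theorem 3.5: for fixed $y\in T$ and $\lambda>0$ one has $\lambda y\in T$, so $\langle\beta,c\rangle<\lambda\langle\beta,A^Ty\rangle$, and letting $\lambda=n\to\infty$ forces $\langle\beta,A^Ty\rangle\geq0$. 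The adjoint identity then gives $\langle A\beta,y\rangle=\langle\beta,A^Ty\rangle\geq0$ for every $y\in T$, that is $A\beta\in T^*$, so $x=\beta$ solves $(II')$.

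The step I expect to be most delicate is not any single estimate but the sign- and adjoint-bookkeeping induced by the transposition relative to Theorem 3.5: one must feed Proposition 3.4 the set $D_A$ and the point $c$ \emph{without} the sign reversal used in the proof of Theorem 3.5 (where $-C_A$ and $-b$ appeared), because the alternative $(II')$ is phrased with $A$ and $c$ rather than with $-A$ and $-c$. Getting the direction of the adjoint identity right, and verifying that the scaling limit produces membership in $T^*$ rather than in $-T^*$, is where an error would most easily creep in; the remainder follows the template of Theorem 3.5 essentially verbatim.
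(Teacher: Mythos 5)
Your proof is correct and takes exactly the approach the paper intends: the paper omits the proof of this theorem, declaring it ``similarly formulated and proved'' as Theorem 3.5 via the corresponding Lemma 3.3 and Proposition 3.4, and your transposition --- applying Proposition 3.4 in $X$ to the convex cone $D_A=\{A^Ty:y\in T\}$ and the excluded point $c$, with no sign reversal since $(II')$ is phrased with $A$ and $c$ rather than $-A$ and $-b$ --- is precisely that analogue. Both halves (the incompatibility of $(I')$ and $(II')$ via the adjoint identity and symmetry, and the scaling argument $\lambda y\in T$, $\lambda\to\infty$ turning the strict separation into $A\beta\in T^*$ with $\langle\beta,c\rangle<0$ from $y=0\in T$) follow the Theorem 3.5 template exactly as the paper prescribes.
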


For the purposes of this paper, the proof is omitted (we leave the formulations and proofs of the corresponding Lemma 3.3 and Proposition 3.4 to the reader).\par
By Theorems 3.1 and 3.2 we obtain the following corollary which is the first of the two main results of this paper:
\begin{corollary}\textit{(Theorem 2.1)}
    If both $(P)$ and $(D)$ have optimal solutions $x^*$, $y^*$ such that $x^*\in$ int$S$, $y^*\in$ int$T$, and $v(P),\;v(D)$ are finite then there exist feasible solutions $\hat{x}\in S$, $\hat{y}\in T$ that satisfy $A\hat{x}=b$, $A^T\hat{y}=c$ and therefore $v(P)=v(D)$ and $\hat{x},$ $\hat{y}$ are optimal.\\ 
\end{corollary}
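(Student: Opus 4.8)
The plan is to deduce the statement directly from the two key theorems, Theorems 3.1 and 3.2, by first recasting the hypotheses into the precise forms those theorems require and then stitching the resulting feasible points together through weak duality. Everything substantive has already been established, so the corollary is essentially a bookkeeping argument.

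First I would observe that the optimal solution $x^*\in$ int$S$ witnesses $\mathcal{S}^*(P)\cap$ int$S\neq\emptyset$, and symmetrically $y^*\in$ int$T$ witnesses $\mathcal{S}^*(D)\cap$ int$T\neq\emptyset$. Since $v(P)$ and $v(D)$ are assumed finite, in particular $v(P)>-\infty$ and $v(D)<+\infty$. Hence the hypotheses of Theorem 3.1 (namely $\mathcal{S}^*(D)\cap$ int$T\neq\emptyset$ and $v(D)<+\infty$) and of Theorem 3.2 (namely $\mathcal{S}^*(P)\cap$ int$S\neq\emptyset$ and $v(P)>-\infty$) are simultaneously met. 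Applying Theorem 3.1 produces a feasible $\hat{x}\in\mathcal{S}(P)$ with $A\hat{x}=b$, and applying Theorem 3.2 produces a feasible $\hat{y}\in\mathcal{S}(D)$ with $A^T\hat{y}=c$.

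It then remains to show that these two points are optimal and that the duality gap closes. The key identity comes from the adjoint relation, which is applicable precisely because $\hat{x}\in S$ and $\hat{y}\in T$:
\[
\langle \hat{y},b\rangle=\langle \hat{y},A\hat{x}\rangle=\langle A^T\hat{y},\hat{x}\rangle=\langle c,\hat{x}\rangle ,
\]
where the first equality uses $A\hat{x}=b$ and the last uses $A^T\hat{y}=c$. Feasibility of $\hat{x}$ for $(P)$ gives $v(P)\leq\langle c,\hat{x}\rangle$, feasibility of $\hat{y}$ for $(D)$ gives $\langle \hat{y},b\rangle\leq v(D)$, and the weak duality relation (2.1) gives $v(D)\leq v(P)$. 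Chaining these with the displayed identity yields $v(P)\leq\langle c,\hat{x}\rangle=\langle \hat{y},b\rangle\leq v(D)\leq v(P)$, so every inequality is forced to be an equality. In particular $v(P)=v(D)$, and $\hat{x}$, $\hat{y}$ attain this common value, hence are optimal.

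I expect essentially no obstacle at the level of the corollary itself, since the generalized Farkas alternatives of Theorems 3.5 and 3.7 and the interior/boundary arguments of Theorems 3.1 and 3.2 already carry the full analytic weight. The only point demanding a little care is the legitimacy of the adjoint interchange $\langle \hat{y},A\hat{x}\rangle=\langle A^T\hat{y},\hat{x}\rangle$, which in this abstract framework is guaranteed only for arguments in $S$ and $T$; because $\hat{x}\in S$ and $\hat{y}\in T$ this is exactly the admissible case, so the identity holds without invoking any additional continuity or closedness of the cones.
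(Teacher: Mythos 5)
Your proposal is correct and takes essentially the same route as the paper, which obtains Theorem 2.1 precisely as a corollary of Theorems 3.1 and 3.2 (their hypotheses being supplied by $y^*\in\mathcal{S}^*(D)\cap\,$int$T$ with $v(D)<+\infty$ and $x^*\in\mathcal{S}^*(P)\cap\,$int$S$ with $v(P)>-\infty$), leaving the concluding step implicit. Your explicit sandwich $v(P)\leq\langle c,\hat{x}\rangle=\langle\hat{y},b\rangle\leq v(D)\leq v(P)$ via the adjoint identity and weak duality (2.1) simply spells out the ``therefore'' the paper takes for granted, and is valid since the interchange $\langle\hat{y},A\hat{x}\rangle=\langle A^T\hat{y},\hat{x}\rangle$ holds in the paper's framework for $\hat{x}\in S$, $\hat{y}\in T$.
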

Similarly to Theorem 2.1, the second main result of this paper, Theorem 2.2, results from two key theorems when \textbf{(F4)} holds:
\begin{theorem}
    If $\hat{\mathcal{S}}(D)=\{y\in$ int$T:-A^Ty+c\in S^*,-A^Ty\in S^*,\langle y,b\rangle>-\infty\}\neq\O$, $\tilde{\mathcal{S}}(D)\smallsetminus\hat{\mathcal{S}}(D)\neq\O$ and $v(D)<\infty$ then there exists $\hat{x}\in\mathcal{S}(P)$ such that $A\hat{x}=b$. 
\end{theorem}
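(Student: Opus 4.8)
The plan is to mirror the proof of Theorem 3.1, replacing the interior optimal solution $y^{*}$ by an element of $\hat{\mathcal{S}}(D)$ and replacing the contradiction with optimality by a contradiction with $v(D)<\infty$. First I would argue by contradiction: assume no $\hat{x}\in S$ satisfies $A\hat{x}=b$. By the first generalized Farkas' Lemma (Theorem 3.5), problem $(II^{*})$ then has a solution $\hat{y}$, so $(-A)^{T}\hat{y}\in S^{*}$ and $\langle\hat{y},b\rangle>0$. I would then use the same dichotomy as in Theorem 3.1, according to whether or not, for every solution $\hat{y}$ of $(II^{*})$, there is a pair $(y_{0},\lambda)\in\hat{\mathcal{S}}(D)\times(0,\infty)$ with $y_{0}+\lambda\hat{y}\in\mbox{int}\,T$.

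The negative branch is handled exactly as in Theorem 3.1. If there is a solution $\hat{y}$ of $(II^{*})$ such that $y_{0}+\lambda\hat{y}\in Y\smallsetminus\mbox{int}\,T$ for every $(y_{0},\lambda)\in\hat{\mathcal{S}}(D)\times(0,\infty)$, then, fixing any $y_{0}\in\hat{\mathcal{S}}(D)$ (which exists since $\hat{\mathcal{S}}(D)\neq\emptyset$) and letting $\lambda=1/n\to 0$, the closedness of $Y\smallsetminus\mbox{int}\,T$ forces $y_{0}\in Y\smallsetminus\mbox{int}\,T$, contradicting $y_{0}\in\hat{\mathcal{S}}(D)\subseteq\mbox{int}\,T$. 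Here only $\hat{\mathcal{S}}(D)\neq\emptyset$ and the openness of $\mbox{int}\,T$ are used, so this part does not invoke \textbf{(F4)}.

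In the positive branch, the decisive observation is that $y_{0}+\lambda\hat{y}$ again lies in $\hat{\mathcal{S}}(D)$: membership in $\mbox{int}\,T$ is the branch hypothesis, while $-A^{T}(y_{0}+\lambda\hat{y})+c=(-A^{T}y_{0}+c)+\lambda(-A^{T}\hat{y})\in S^{*}$ and $-A^{T}(y_{0}+\lambda\hat{y})=(-A^{T}y_{0})+\lambda(-A^{T}\hat{y})\in S^{*}$ both hold because $S^{*}$ is a convex cone and because the defining conditions of $\hat{\mathcal{S}}(D)$ supply $-A^{T}y_{0}+c\in S^{*}$ together with the extra condition $-A^{T}y_{0}\in S^{*}$, the latter being precisely why that condition is built into $\hat{\mathcal{S}}(D)$. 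Since $\mbox{int}\,T$ is convex and open, the set $\{\lambda\geq 0:y_{0}+\lambda\hat{y}\in\mbox{int}\,T\}$ is an interval $[0,\Lambda)$, the whole segment lies in $\hat{\mathcal{S}}(D)$, and the objective $\langle y_{0},b\rangle+\lambda\langle\hat{y},b\rangle$ is strictly increasing in $\lambda$ because $\langle\hat{y},b\rangle>0$. If $\Lambda=\infty$ this produces feasible points of unbounded objective, contradicting $v(D)<\infty$, and we are done. If $\Lambda<\infty$, the exit point $w:=y_{0}+\Lambda\hat{y}$ lies in $Y\smallsetminus\mbox{int}\,T$, which is a cone by \textbf{(F4)}, and still satisfies $-A^{T}w+c\in S^{*}$ and $-A^{T}w\in S^{*}$; the aim is to combine $w$ with a point of $\tilde{\mathcal{S}}(D)\smallsetminus\hat{\mathcal{S}}(D)$ and, using positive homogeneity, to manufacture a feasible sequence with objective tending to $+\infty$.

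The main obstacle is exactly this case $\Lambda<\infty$. There the supremum of the objective along the segment equals $V:=\langle y_{0},b\rangle+\Lambda\langle\hat{y},b\rangle$, and since replacing $y_{0}$ by $\mu y_{0}$ (still in $\hat{\mathcal{S}}(D)$ for $\mu\geq 1$) merely rescales the whole configuration, multiplying both $\Lambda$ and $V$ by $\mu$, pure scaling forces a contradiction only when $V>0$. Closing the case $V\leq 0$ is where the two remaining hypotheses must genuinely enter: \textbf{(F4)} guarantees that $w$ and its positive multiples remain in $Y\smallsetminus\mbox{int}\,T$, while $\tilde{\mathcal{S}}(D)\smallsetminus\hat{\mathcal{S}}(D)\neq\emptyset$ furnishes a feasible point off the interior that can be combined with the direction $\hat{y}$ so as to re-enter $T$ while the objective grows without bound. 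Making this combination precise, in particular verifying that the combined point stays in $T$ and keeps $-A^{T}(\cdot)+c\in S^{*}$, is the delicate step, and it is the only point at which the argument departs from the proof of Theorem 3.1.
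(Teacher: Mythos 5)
Your skeleton (contradiction, Theorem 3.5, dichotomy over solutions of $(II^*)$) is sound as far as it goes: the negative branch via closedness of $Y\smallsetminus\mbox{int}T$ is correct, and your observation that $y_0+\lambda\hat{y}$ remains in $\hat{\mathcal{S}}(D)$ --- this being precisely why the extra condition $-A^Ty\in S^*$ is built into $\hat{\mathcal{S}}(D)$ --- matches the paper. But the proposal stalls exactly where the theorem's difficulty lives: the case $\Lambda<\infty$ (your $V\leq 0$ subcase) is left as an unexecuted ``aim,'' so this is a genuine gap, not a complete proof. The underlying reason you get stuck is structural: you fix a feasible point $y_0$ and scale the step $\lambda$ along the Farkas direction, so your only available contradiction is a value blow-up, and in the hard case the objective along the segment simply need not blow up.

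The paper closes the argument with two moves absent from your plan. First, a localization step: since the primal feasible set $\{x\in S: Ax-b\in T^*\}$ is non-empty (the paper deduces this from $\hat{\mathcal{S}}(D)\neq\emptyset$) and $T^*\subset(\mbox{int}T)^*$, Lemma 3.10 applied to $S$ and $\mbox{int}T$ shows $(II^*)$ has no solution in $\mbox{int}T$; hence some solution $\hat{y}$ lies in $Y\smallsetminus\mbox{int}T$, and by \textbf{(F4)} every multiple $n\hat{y}$ is again such a solution (problem $(III)$). The dichotomy is then run over solutions of $(III)$ with \emph{additive} perturbations: if every solution of $(III)$ can be pushed into $\mbox{int}T$ by some element of $\tilde{\mathcal{S}}(D)$, one applies this to the scaled solutions $n\hat{y}$ and the feasible values $n\langle\hat{y},b\rangle+\langle y_n^*,b\rangle$ blow up, contradicting $v(D)<\infty$ --- scaling the \emph{direction} $\hat{y}$ (legitimate by \textbf{(F4)}) replaces the optimality of $y^*$ used in Theorem 3.1 and bypasses your exit-point problem entirely. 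Second, in the residual branch ($\hat{y}+y\in Y\smallsetminus\mbox{int}T$ for all $y\in\tilde{\mathcal{S}}(D)$), the contradiction is geometric rather than value-theoretic: for $y^*\in\hat{\mathcal{S}}(D)$ one checks $ny^*\in\tilde{\mathcal{S}}(D)$ (again via $-A^Ty^*\in S^*$), so $\hat{y}+ny^*\in Y\smallsetminus\mbox{int}T$; dividing by $n$ --- \textbf{(F4)} used a second time --- gives $\hat{y}/n+y^*\in Y\smallsetminus\mbox{int}T$, and letting $n\to\infty$ with closedness forces $y^*\in Y\smallsetminus\mbox{int}T$, contradicting $y^*\in\mbox{int}T$. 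Your intended repair --- combining the exit point $w$ with a point of $\tilde{\mathcal{S}}(D)\smallsetminus\hat{\mathcal{S}}(D)$ to drive the objective to $+\infty$ --- chases the wrong kind of contradiction in that branch: the paper never makes the value diverge there, it shows instead that an interior point would be expelled from the interior.
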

\begin{theorem}
    If $\hat{\mathcal{S}}(P)=\{x\in$ int$S:Ax-b\in T^*,Ax\in T^*,\langle x,c\rangle<\infty\}\neq\O$, $\tilde{\mathcal{S}}(P)\smallsetminus\hat{\mathcal{S}}(P)\neq\O$ and $v(P)>-\infty$ then there exists $\hat{y}\in\mathcal{S}(D)$ such that $A^T\hat{y}=c$.\\
\end{theorem}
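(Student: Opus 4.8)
The plan is to mirror the proof of Theorem 3.1, now on the primal side and driving the contradiction through the second generalized Farkas' Lemma (Theorem 3.6) together with unboundedness of $v(P)$ in place of an optimality argument. I would argue by contradiction and assume that no $\hat{y}\in T$ satisfies $A^T\hat{y}=c$; that is, problem $(I')$ of Theorem 3.6 has no solution. Theorem 3.6 then furnishes a solution $\hat{x}$ of $(II')$, i.e.\ a vector $\hat{x}\in X$ with $A\hat{x}\in T^*$ and $\langle\hat{x},c\rangle<0$. This $\hat{x}$ will serve as the improving direction for the primal minimization.

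Next I would bring in a point $x_0\in\hat{\mathcal{S}}(P)$, available since $\hat{\mathcal{S}}(P)\neq\emptyset$. The decisive feature of $\hat{\mathcal{S}}(P)$ is the extra membership $Ax_0\in T^*$, beyond ordinary feasibility $Ax_0-b\in T^*$. Because $Ax_0\in T^*$, $A\hat{x}\in T^*$ and $T^*$ is a convex cone, the whole ray $x_0+\lambda\hat{x}$ ($\lambda\geq0$) keeps both $A(x_0+\lambda\hat{x})\in T^*$ and $A(x_0+\lambda\hat{x})-b\in T^*$; and for any ray point $z=x_0+\lambda\hat{x}$ that still lies in $S$ one has $\mu z\in S$ (as $S$ is a cone) and $A(\mu z)-b=(\mu-1)(Az)+(Az-b)\in T^*$ for every $\mu\geq1$, so $\mu z$ remains feasible for $(P)$ with value $\mu\langle c,z\rangle$. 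Hence a single feasible ray point with negative objective value already forces $v(P)=-\infty$. In particular, scaling $x_0$ itself shows $\langle x_0,c\rangle\geq0$, while the improving direction gives $\langle c,x_0+\lambda\hat{x}\rangle=\langle x_0,c\rangle+\lambda\langle\hat{x},c\rangle\to-\infty$; so it suffices to keep $x_0+\lambda\hat{x}\in S$ for arbitrarily large $\lambda$ to contradict $v(P)>-\infty$.

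The main obstacle is precisely this membership in the (possibly non-closed) cone $S$ along the unbounded improving ray, and it is here that assumption \textbf{(F4)} and the auxiliary point $x_1\in\tilde{\mathcal{S}}(P)\smallsetminus\hat{\mathcal{S}}(P)$ must be used. Mirroring the closing of Theorem 3.1, I would organize the argument as a dichotomy over the choice of Farkas solution $\hat{x}$. For small $\lambda>0$ openness of int$S$ keeps $x_0+\lambda\hat{x}\in$ int$S$; if some admissible $\hat{x}$ keeps the ray in int$S$ for all large $\lambda$ as well — which is the information the conic complement hypothesis \textbf{(F4)} on $X\smallsetminus$int$S$ is meant to supply — then the scaling above produces arbitrarily negative feasible values and contradicts $v(P)>-\infty$. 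Otherwise, for every admissible $\hat{x}$ the ray $x_0+\lambda\hat{x}$ lies in the closed cone $X\smallsetminus$int$S$ for every $\lambda>0$, and taking $\lambda=1/n\to0$ yields $x_0\in X\smallsetminus$int$S$, contradicting $x_0\in$ int$S$. The feasible point $x_1\in\tilde{\mathcal{S}}(P)\smallsetminus\hat{\mathcal{S}}(P)$, which has finite objective value but fails the extra condition defining $\hat{\mathcal{S}}(P)$, is what certifies that $v(P)$ is genuinely finite and fixes the improving ray on the correct side of $\partial S$, so that \textbf{(F4)} can be invoked.

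I expect the genuinely delicate step to be making this dichotomy rigorous in the abstract, non-Euclidean topology assumed in Section 2: one must control how the Farkas direction $\hat{x}$ sits relative to the recession behaviour of the cone $S$, i.e.\ whether $x_0+\lambda\hat{x}$ can leave $S$ at a finite parameter, since without closedness of $S$ the usual recession-cone arguments are unavailable and \textbf{(F4)} together with $x_1$ has to substitute for them. Once that membership issue is settled, the conclusion that some $\hat{y}\in\mathcal{S}(D)$ satisfies $A^T\hat{y}=c$ follows as in Theorem 3.1, noting that any solution of $(I')$ automatically lies in $\mathcal{S}(D)$ because $A^T\hat{y}=c$ gives $-A^T\hat{y}+c=0\in S^*$.
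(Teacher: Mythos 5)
Your opening moves match the paper's: argue by contradiction, invoke Theorem 3.6 to get $\hat{x}\in X$ with $A\hat{x}\in T^*$ and $\langle\hat{x},c\rangle<0$, use the extra membership $Ax\in T^*$ of points of $\hat{\mathcal{S}}(P)$ to keep translates and scalings primal-feasible, and note at the end that $A^T\hat{y}=c$ gives $-A^T\hat{y}+c=0\in S^*$, hence $\hat{y}\in\mathcal{S}(D)$. But the core of your argument --- the dichotomy --- has a genuine gap, and you flag it yourself. Your two horns are not mutually exhaustive: the negation of ``there exists a Farkas solution $\hat{x}$ with $x_0+\lambda\hat{x}\in$ int$S$ for all large $\lambda$'' is ``for every $\hat{x}$ there are arbitrarily large $\lambda$ with $x_0+\lambda\hat{x}\notin$ int$S$,'' not ``the ray lies in $X\smallsetminus$int$S$ for \emph{every} $\lambda>0$.'' Worse, your second horn is vacuous on its face: as you yourself observe, for small $\lambda$ the ray point stays in int$S$, so ``$x_0+\lambda\hat{x}\in X\smallsetminus$int$S$ for all $\lambda>0$'' can never occur, and the limit $\lambda=1/n\to 0$ extracts nothing. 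The intermediate case --- a ray that leaves int$S$ at some finite parameter without either horn holding --- is exactly the case your argument cannot handle, and no amount of appeal to \textbf{(F4)} or to $x_1\in\tilde{\mathcal{S}}(P)\smallsetminus\hat{\mathcal{S}}(P)$ as stated repairs it.

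The paper (in the mirror proof of Theorem 3.8, which it declares analogous) closes this gap with two steps absent from your proposal. First, before any dichotomy, it confines the Farkas solution to $X\smallsetminus$int$S$: dual feasibility (which it extracts from $\hat{\mathcal{S}}(P)\neq\O$) together with $S^*\subset(\mbox{int}S)^*$ and the dual form of Lemma 3.10 shows that no solution of $(II')$ can lie in int$S$, so there is a solution of the restricted problem $(III)$ with $x\in X\smallsetminus$int$S$; by \textbf{(F4)} all its positive multiples $\lambda\hat{x}$ are again solutions. Second, the dichotomy is taken over a genuine negation: either every solution $\hat{x}$ of $(III)$ admits some $x\in\tilde{\mathcal{S}}(P)$ with $\hat{x}+x\in$ int$S$ --- then applying this to the scaled solutions $n\hat{x}$ drives $v(P)\to-\infty$, contradicting $v(P)>-\infty$ --- or some fixed solution $\hat{x}$ satisfies $\hat{x}+x\in X\smallsetminus$int$S$ for \emph{all} $x\in\tilde{\mathcal{S}}(P)$. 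In the latter horn one takes $x^*\in\hat{\mathcal{S}}(P)$, uses $Ax^*\in T^*$ to check $nx^*\in\tilde{\mathcal{S}}(P)$ for every $n$, hence $\hat{x}+nx^*\in X\smallsetminus$int$S$; the cone property \textbf{(F4)} then allows division by $n$, giving $\hat{x}/n+x^*\in X\smallsetminus$int$S$, and closedness of $X\smallsetminus$int$S$ yields $x^*\in X\smallsetminus$int$S$ in the limit, contradicting $x^*\in$ int$S$. Note the limit is taken at a translated point $\hat{x}/n+x^*$ with the Farkas direction scaled \emph{down}, not along your unbounded ray; this ``add $nx^*$, divide by $n$'' maneuver is precisely where \textbf{(F4)} and the membership $Ax^*\in T^*$ do their work, and it is the ingredient your sketch is missing.
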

Again, the proof of Theorem 3.9 is omitted and only that of Theorem 3.8 is given in order to avoid superfluous repeated methods and contiguous results. We will need the following Lemma:
\begin{lemma}
    Let $S\subset X$, $T\subset Y$. Then at most one of the following two problems has a solution:
    \begin{equation*}
(I) \left. \begin{array}{ll}
         \langle Ax-b,y\rangle\leq 0\;\;\forall y\in T\\
        x\in S \end{array} \right., \;\;\;
(II) \left. \begin{array}{ll}
         A^Ty\in S^*\\
        \langle y,b\rangle <0\\
        y\in T \end{array} \right.
        \end{equation*}
\end{lemma}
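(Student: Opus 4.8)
The statement asserts the ``weak'' or easy half of a Farkas-type alternative, so it suffices to show that the simultaneous solvability of $(I)$ and $(II)$ leads to a contradiction. The plan is to take a putative common solution and chain together the two defining inequalities through the adjoint relation of Section 2.

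First I would assume, for contradiction, that there exist $\hat{x}\in S$ solving $(I)$ and $\hat{y}\in T$ solving $(II)$. Since the inequality in $(I)$ holds for every $y\in T$ and in particular $\hat{y}\in T$, I would specialize it at $y=\hat{y}$ to obtain $\langle A\hat{x}-b,\hat{y}\rangle\leq 0$, which by bilinearity rearranges to $\langle A\hat{x},\hat{y}\rangle\leq\langle b,\hat{y}\rangle$.

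The crucial step is to rewrite the left-hand side using the adjoint. Because $\hat{x}\in S$ and $\hat{y}\in T$, the identity $\langle \hat{y},A\hat{x}\rangle=\langle A^T\hat{y},\hat{x}\rangle$ from the framework is available precisely in the regime where it was assumed to hold; combined with symmetry this gives $\langle A\hat{x},\hat{y}\rangle=\langle A^T\hat{y},\hat{x}\rangle$. Since $(II)$ supplies $A^T\hat{y}\in S^*$ and $\hat{x}\in S$, the definition of the positive dual cone yields $\langle A^T\hat{y},\hat{x}\rangle\geq 0$. Feeding this into the inequality from the previous step produces $0\leq\langle A\hat{x},\hat{y}\rangle\leq\langle b,\hat{y}\rangle$, hence $\langle \hat{y},b\rangle\geq 0$ by symmetry, directly contradicting the requirement $\langle y,b\rangle<0$ imposed in $(II)$.

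I do not anticipate a genuine obstacle here, as no separation or closure argument is needed; the only point requiring care is that the adjoint identity is invoked solely for the pair $(\hat{x},\hat{y})\in S\times T$, which is exactly the range in which the framework guarantees $\langle y,Ax\rangle=\langle A^Ty,x\rangle$. This is why the quantifier ``$\forall y\in T$'' in $(I)$ and the membership $y\in T$ (rather than $y\in Y$) in $(II)$ are both essential: specializing $(I)$ at the feasible $\hat{y}$ is what activates the adjoint relation and closes the contradiction.
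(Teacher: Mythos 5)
Your proposal is correct and matches the paper's own proof essentially step for step: both specialize the inequality in $(I)$ at the solution $\hat{y}$ of $(II)$, invoke the adjoint identity $\langle A\hat{x},\hat{y}\rangle=\langle \hat{x},A^T\hat{y}\rangle$ together with $A^T\hat{y}\in S^*$, $\hat{x}\in S$ to get $\langle A\hat{x},\hat{y}\rangle\geq 0$, and contradict $\langle \hat{y},b\rangle<0$. Your added remark about the adjoint relation holding precisely on $S\times T$ is a sound observation consistent with the framework of Section 2, though the paper leaves it implicit.
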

\begin{proof}
    Suppose that both $(I)$ and $(II)$ have a solution. Let $\hat{x}\in S$ be a  solution of $(I)$ and $\hat{y}\in T$ be a solution of $(II)$. Then
    \begin{equation*}
        \langle A\hat{x}-b,y\rangle\leq 0\;\;\forall y\in T\Rightarrow\langle A\hat{x},\hat{y}\rangle\leq\langle b,\hat{y}\rangle< 0
    \end{equation*}
    while $\langle A\hat{x},\hat{y}\rangle=\langle\hat{x},A^T\hat{y}\rangle\geq 0$, which is a contradiction.\\
\end{proof}
We can now prove Theorem 3.8:
\begin{proof}\textit{(Theorem 3.8)} Suppose by contradiction that a feasible solution $x\in S$ of $(P)$ such that $Ax=b$ does not exist. By Theorem 3.5 the following problem has a solution:
\begin{equation*}
    (II^*) \left. \begin{array}{ll}
         (-A)^Ty\in S^*\\
        \langle y,-b\rangle <0\\
        y\in Y \end{array} \right.
\end{equation*}
The set $\hat{\mathcal{S}}(D)$ is non-empty, therefore the feasible set $Ax-b\in T^*$, $x\in S$ is non-empty. Due to the relation $T^*\subset($int$T)^*$, $Ax-b\in($int$T)^*$, $x\in S$ has a solution. Hence there exists $x\in S$ such that $\langle (-A)x-(-b),y\rangle\leq 0$ $\forall y\in$ int$T$. By Lemma 3.10 for the sets $S,$ int$T$ the following does not have a solution:
\begin{equation*}
(II) \left. \begin{array}{ll}
         (-A)^Ty\in S^*\\
        \langle y,-b\rangle <0\\
        y\in \mbox{int}T \end{array} \right.
\end{equation*}
Therefore problem $(III)$ has a solution, where problem $(III)$ is defined by:
\begin{equation*}
(III) \left. \begin{array}{ll}
         (-A)^Ty\in S^*\\
        \langle y,-b\rangle <0\\
        y\in Y\smallsetminus\mbox{int}T \end{array} \right.
\end{equation*}
Note that if $\hat{y}\in Y\smallsetminus$int$T$ is a solution of $(III)$ then $\lambda\hat{y}$ is also a solution of the problem $\forall\lambda\in (0,\infty)$ due to \textbf{(F4)}.\par
Now suppose that for every solution $\hat{y}$ of $(III)$ there exists a feasible solution $y^*\in T$ such that $\hat{y}+y^*\in$ int$T$ and $\langle y^*,b\rangle>-\infty$. Then $\hat{y}+y^*$ is also a feasible solution. Indeed, we have
\begin{equation*}
    \langle-A^Ty^*+c,x\rangle\geq 0 \mbox{ and}\;\langle-A^T\hat{y},x\rangle\geq0\;\;\forall x\in S\Rightarrow \langle -A^T(y^*+\hat{y})+c,x\rangle\geq 0\;\;\forall x\in S
\end{equation*}
For solutions $\{n\hat{y}\in Y\smallsetminus$int$T:n\in\mathbb{N}\}$, where $\hat{y}$ is a solution of $(III)$, we obtain $v(D)\rightarrow\infty$ as $n\rightarrow\infty$, which is a contradiction.\par
Therefore there exists a solution $\hat{y}\in Y\smallsetminus$int$T$ of $(III)$ such that $\forall y\in\tilde{\mathcal{S}}(D)$: $\hat{y}+y\in Y\smallsetminus$int$T$. Let $y^*\in\hat{\mathcal{S}}(D)\subset\tilde{\mathcal{S}}(D)$. Then $-A^Ty^*\in S^*$, hence $\langle -A^Tny^*+c,x\rangle\geq 0$ $\forall x\in S$ and $\forall n\in\mathbb{N}$. Alike the above we obtain that $ny^*\in\tilde{\mathcal{S}}(D)$, $\forall y^*\in\hat{\mathcal{S}}(D)$ and $\forall n\in\mathbb{N}$.\par
Since $Y\smallsetminus$int$T$ is a cone and $\hat{y}+ny^*\in Y\smallsetminus$int$T$ $\forall y^*\in\hat{\mathcal{S}}(D)$, $\forall n\in\mathbb{N}$ we obtain that $\hat{y}/n+y^*\in Y\smallsetminus$int$T$ $\forall y^*\in\hat{\mathcal{S}}(D)$, $\forall n\in\mathbb{N}$. As $n\rightarrow\infty$ we get $y^*\in Y\smallsetminus$int$T$ $\forall y^*\in\hat{\mathcal{S}}(D)$, which is a contradiction.
\end{proof}
Again, the proof of Theorem 3.9 is similar to the above, and the dual formulation of Lemma 3.10 is used (the reader may complement the unnecessary details). Theorem 2.2 is obtained as a corollary of the two results:\\
\begin{corollary}\textit{(Theorem 2.2)} If $\hat{\mathcal{S}}(P)\neq\O$, $\hat{\mathcal{S}}(D)\neq\O$, $\tilde{\mathcal{S}}(P)\smallsetminus\hat{\mathcal{S}}(P)\neq\O$, $\tilde{\mathcal{S}}(D)\smallsetminus\hat{\mathcal{S}}(D)\neq\O$ and the optimal values $v(P),v(D)$ are finite, then there exist feasible solutions $\hat{x}\in S$, $\hat{y}\in T$ such that $A\hat{x}=b$, $A^T\hat{y}=c$ respectively and therefore $v(P)=v(D)$ and $\hat{x}$, $\hat{y}$ are optimal.\\ \\
\end{corollary}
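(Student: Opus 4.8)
The plan is to invoke Theorems 3.8 and 3.9 directly and then glue their conclusions together using the adjoint identity and weak duality; all of the analytic difficulty has already been discharged in those two theorems. First I would check that the corollary's hypotheses supply exactly what each theorem demands. Since $v(D)$ is finite we have $v(D)<\infty$, and combined with $\hat{\mathcal{S}}(D)\neq\O$ and $\tilde{\mathcal{S}}(D)\smallsetminus\hat{\mathcal{S}}(D)\neq\O$, Theorem 3.8 applies and produces a feasible $\hat{x}\in\mathcal{S}(P)$ with $A\hat{x}=b$. Symmetrically, finiteness of $v(P)$ gives $v(P)>-\infty$, so together with $\hat{\mathcal{S}}(P)\neq\O$ and $\tilde{\mathcal{S}}(P)\smallsetminus\hat{\mathcal{S}}(P)\neq\O$, Theorem 3.9 applies and produces a feasible $\hat{y}\in\mathcal{S}(D)$ with $A^T\hat{y}=c$.

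Next I would show that the primal and dual objective values at these two points coincide. Because $\hat{x}\in S$ and $\hat{y}\in T$, the adjoint identity $\langle y,Ax\rangle=\langle A^Ty,x\rangle$ of the Section 2 framework is available at the pair $(\hat{x},\hat{y})$. Substituting $A^T\hat{y}=c$ and then $A\hat{x}=b$ yields
\begin{equation*}
\langle c,\hat{x}\rangle=\langle A^T\hat{y},\hat{x}\rangle=\langle \hat{y},A\hat{x}\rangle=\langle \hat{y},b\rangle .
\end{equation*}
Hence the common value $\langle c,\hat{x}\rangle=\langle \hat{y},b\rangle$ is simultaneously an upper bound for $v(P)$, since $\hat{x}$ is primal-feasible, and a lower bound for $v(D)$, since $\hat{y}$ is dual-feasible; that is, $v(P)\leq\langle c,\hat{x}\rangle=\langle \hat{y},b\rangle\leq v(D)$.

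Finally I would close the duality gap. Weak duality (2.1) gives $v(P)\geq v(D)$, and combined with the chain just obtained this forces $v(P)=v(D)=\langle c,\hat{x}\rangle=\langle \hat{y},b\rangle$. Consequently $\hat{x}$ attains $v(P)$ and $\hat{y}$ attains $v(D)$, so both are optimal, which completes the proof. There is no real obstacle at the level of the corollary itself: the substantive work lies entirely in Theorems 3.8 and 3.9 (and therefore in the generalized Farkas lemmas and in assumption \textbf{(F4)}). The only point meriting a moment's care is that the adjoint identity is asserted only for $x\in S$ and $y\in T$, which is exactly where $\hat{x}$ and $\hat{y}$ reside, so the objective-value computation above is legitimate.
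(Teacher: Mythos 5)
Your proposal is correct and follows essentially the same route as the paper, which states the corollary as an immediate consequence of Theorems 3.8 and 3.9 without spelling out the details. Your explicit gluing step --- the chain $\langle c,\hat{x}\rangle=\langle A^T\hat{y},\hat{x}\rangle=\langle \hat{y},A\hat{x}\rangle=\langle \hat{y},b\rangle$ via the adjoint identity on $S\times T$, closed by weak duality (2.1) --- is exactly the intended argument (the same one underlying Corollary 3.7), and your verification that the corollary's hypotheses match those of Theorems 3.8 and 3.9 is accurate.
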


\section{Linear programming in Complex space}

The application of the main results in  the trivial case of linear programming in real space is obvious and equivalent results are already known; it suffices to look at any textbook on linear programming and duality theory.\par We will particularly work with conic linear problems that are formulated in complex space and discuss some cases where the main results can be put in an application. Firstly, let the following primal-dual pair of problems:
\begin{center}
\begin{equation*}{(P)\;\;\;\;\;\;\;\;}
\begin{array}{ll}
    min\;\; Re(c,z)\\
    s.t.\;\; Az-b\in T^*\\ \;\;\;\;\;\;\;\;z\in S
    \end{array}
\end{equation*}
\end{center}
\begin{center}
\begin{equation*}{(D)\;\;\;\;\;\;\;\;}
\begin{array}{ll}
    max\;\; Re(w,b)\\
    s.t.\;\; -A^*w+c\in S^*\\ \;\;\;\;\;\;\;\;w\in T
    \end{array}
\end{equation*}
\end{center}

Here the objective function is bilinear, symmetrical and continuous, and is defined by $Re(z,c)=Re(z\cdot c)=Re(z^*c)=Re(\displaystyle\sum_{i=1}^{m}\bar{z_i}c_i)$. The sets $S,T$ are defined by
\begin{equation*}
    S:=\{z\in\mathbb{C}^{m}:|argz|\leqq\alpha\},\;\;\; T:=\{w\in\mathbb{C}^{n}:|argw|\leqq\beta\}
\end{equation*}
where $\alpha$, $\beta$ are real m and n-vectors in $(0,\frac{\pi}{2})e$ respectively, where $e$ are the m and n-vectors with all coordinates equal to $1$. Note that the sets $S,T$ are closed convex cones and that their positive duals are defined by
\begin{equation*}
    S^*=\{z\in\mathbb{C}^{m}:|argz|\leqq\ \frac{\pi}{2}e-\alpha\},\;\;\; T^*=\{w\in\mathbb{C}^{n}:|argw|\leqq \frac{\pi}{2}e-\beta\}
\end{equation*}
Therefore the dual problems can also be writen as
\begin{center}
\begin{equation*}{(P)\;\;\;\;\;\;\;\;}
\begin{array}{ll}
    min\;\; Re(z,c)\\
    s.t.\;\; \lvert argz\rvert\leqq\alpha\\ \lvert arg(Az-b)\rvert\leqq\frac{\pi}{2}e-\beta
    \end{array}
    \end{equation*}
\end{center}
\begin{center}
\begin{equation*}{(D)\;\;\;\;\;\;\;\;}
\begin{array}{ll}
    max\;\; Re(w,b)\\
    s.t.\;\; \lvert argw\rvert\leqq\beta\\ \lvert arg(-A^*w+c)\rvert\leqq\frac{\pi}{2}e-\alpha
    \end{array}
    \end{equation*}
\end{center}
In addition, the linear operator $A$ is a $n\times m$ complex matrix and $A^*$ is its conjugate transpose.\\ \par
Strong duality for the above pair of problems was first proved by Levinson \cite{10}.:
\begin{theorem}
 If $(P),(D)$ have feasible solutions, then they have optimal solutions $\hat{z}\in S$, $\hat{w}\in T$ respectively and $v(P)=v(D)$.
\end{theorem}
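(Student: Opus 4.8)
The plan is to recognize that, after a real reinterpretation, this complex programme is an instance of the class $(P),(D)$ of Section 2 with \emph{polyhedral} cones, and to read off solvability and the zero gap from that polyhedral structure. First I would fix the bilinear forms: viewing $\mathbb{C}^m$ and $\mathbb{C}^n$ as the real spaces $\mathbb{R}^{2m}$, $\mathbb{R}^{2n}$, set $\langle u,v\rangle:=Re(u^*v)$ on each. Every such form is symmetric, $\mathbb{R}$-bilinear and positive definite, and the objective $Re(z,c)$ of $(P)$ is exactly $\langle c,z\rangle$. The identity $\langle w,Az\rangle=Re(w^*Az)=Re((A^*w)^*z)=\langle A^*w,z\rangle$ shows that the adjoint demanded in Section 2 is the conjugate transpose $A^T=A^*$, so the pair above is literally of the form $(P),(D)$.

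I would then verify \textbf{(F1)}--\textbf{(F3)}. Since each $\alpha_i,\beta_j\in(0,\frac{\pi}{2})$, the interiors $\{|\arg z|<\alpha\}$ and $\{|\arg w|<\beta\}$ are nonempty, which is \textbf{(F1)}; positive definiteness of $\langle\cdot,\cdot\rangle$ on the whole space gives \textbf{(F2)} for free. The decisive point is \textbf{(F3)}, and here the key observation is that a planar sector $\{z\in\mathbb{C}:|\arg z|\le\alpha\}$ with $\alpha<\frac{\pi}{2}$ equals the polyhedral cone $\{(x,y):x\ge 0,\ |y|\le x\tan\alpha\}$ in $\mathbb{R}^2$. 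Hence $S$, $T$ and, since $\frac{\pi}{2}-\alpha$ and $\frac{\pi}{2}-\beta$ again lie in $(0,\frac{\pi}{2})$, also $S^*$, $T^*$ are polyhedral cones in the real spaces; the images $A(S)$ and $A^*(T)$ appearing in the auxiliary problems (1.2) are then images of polyhedral cones under real-linear maps, hence polyhedral and in particular closed. Minimising the positive definite form $\langle\cdot,\cdot\rangle$ over the closed convex sets $A(S)-b$ and $A^*(T)-c$ is therefore an orthogonal projection onto a closed convex set, whose minimiser exists; this establishes \textbf{(F3)}.

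With the problem placed inside the framework I would conclude in two steps. For solvability, feasibility of $(D)$ gives $v(D)>-\infty$, and weak duality (2.1) then forces $v(P)\ge v(D)>-\infty$; together with feasibility of $(P)$, which gives $v(P)<\infty$, both optimal values are finite. Because, after the real reduction, $(P)$ and $(D)$ are ordinary linear programmes over polyhedral cones in finite dimension, finiteness of the optimal value forces attainment, so optimal $\hat z\in S$ and $\hat w\in T$ exist. For $v(P)=v(D)$ I would invoke classical linear programming strong duality for the real reformulation; equivalently, the alternative of the generalized Farkas Lemma (Theorems 3.5 and 3.7) applied to the now closed data produces the multiplier that closes the gap and yields the complementarity relations (1.1) at $(\hat z,\hat w)$.

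I expect the main obstacle to be \textbf{(F3)}, the attainment of the norm minima over $A(S)-b$ and $A^*(T)-c$: for a general conic pair $(P),(D)$ the image of a closed convex cone under a linear map need not be closed, and attainment can then fail. What rescues the present case is precisely that the angular sectors are polyhedral, so $A(S)$ and $A^*(T)$ stay polyhedral and closed in finite dimension; this same feature is what lets me pass from finiteness of $v(P),v(D)$ to genuine attainment of optimal solutions. The remaining work, namely checking that the conjugate-transpose adjoint is compatible with the sector descriptions of $S^*,T^*$ and with the real coordinatisation, is routine bookkeeping.
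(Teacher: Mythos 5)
The paper itself contains no proof of this statement: Theorem 4.1 is quoted as Levinson's strong duality theorem for linear programming in complex space (reference [10]) and is imported as known background for Section 4, where the paper's own work is only to verify \textbf{(F1)}--\textbf{(F2)} and to discuss when \textbf{(F3)} holds so that Theorems 2.1 and 2.2 can be applied \emph{on top of} Levinson's result. So your proposal cannot be compared to an internal proof; judged on its own merits, its main line is correct and is the standard route to this theorem. The decisive observations all check out: $\mathrm{Re}(u^*v)$ is a symmetric, positive definite $\mathbb{R}$-bilinear form under the identification $\mathbb{C}^m\cong\mathbb{R}^{2m}$, with adjoint $A^*$; each sector $\{z\in\mathbb{C}:\lvert\arg z\rvert\leq\alpha_j\}$ with $0<\alpha_j<\frac{\pi}{2}$ is the polyhedral cone $\{(x,y):\pm y\leq x\tan\alpha_j\}$, so $S$, $T$ and (since $\frac{\pi}{2}e-\alpha$, $\frac{\pi}{2}e-\beta$ again lie in $(0,\frac{\pi}{2})e$) also $S^*$, $T^*$ are polyhedral; linear images of polyhedral cones are polyhedral, hence closed, which gives the attainment in \textbf{(F3)} by projection onto a closed convex set; and $T^{**}=T$ identifies the conic dual of the real reformulation of $(P)$ with $(D)$, after which classical polyhedral LP duality (both problems feasible implies both attained with zero gap, finiteness of values coming from weak duality (2.1)) yields exactly the statement. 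Your closedness argument for $A(S)-b$ and $A^*(T)-c$ in fact supplements the paper, which in this section handles \textbf{(F3)} only through ad hoc hypotheses on $A$, $b$, $c$ and level-boundedness.

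One clause in your final step should be struck: the claim that, \emph{equivalently}, the gap can be closed by invoking the paper's generalized Farkas lemmas (Theorems 3.5 and 3.7). Those alternatives only decide solvability of $Az=b$, $z\in S$ (respectively $A^*w=c$, $w\in T$); to turn them into strong duality the paper needs the substantially stronger hypotheses of Theorems 3.1--3.2 (an optimal solution in int$S$, resp. int$T$) or the nonemptiness conditions $\hat{\mathcal{S}}(P)\neq\O$, $\hat{\mathcal{S}}(D)\neq\O$ of Theorem 2.2, none of which follow from mere feasibility of $(P)$ and $(D)$. Indeed, the paper positions its own results as \emph{complementary} to Theorem 4.1, not as a source for it: it uses Levinson's theorem to guarantee optimal solutions exist and then applies Theorems 2.1--2.2 to locate them. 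Since your primary argument (classical LP duality after the polyhedral reduction) is self-contained and does not rely on this clause, deleting it leaves a complete and correct proof.
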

 \par 

Conditions \textbf{(F1), (F2)} hold by definition, while the case of assumption \textbf{(F3)} is slightly more complicated and, in agreement to the previous discussion in Sections 1 and 2, is the one that essentially needs to hold in order for the main theorems to be applied. Let us see the following example:\\ \par
Let $\alpha\in(0,\frac{\pi}{2})e$, $\beta\in(0,\frac{\pi}{2})e$, $S=\{z\in\mathbb{C}^{m}:0\leqq argz\leqq\alpha\},\; T=\{w\in\mathbb{C}^{n}:0\geqq argw\geqq-\beta\}$ and all elements $a_{ij}$ of the matrix $A$ satisfy:
\begin{equation*}
    -\frac{\pi}{2}\leq arg(a_{ij})\leq0
\end{equation*}
Then, the objective function $Re(\cdot,\cdot)$ is lower level-bounded over the sets $C_A-b$, $D_A-c$ and as a result \textbf{(F3)} is true (\cite{14} Theorem 1.9) and Theorems $2.1$, $2.2$ can be applied. In particular, we can easily identify the geometric properties of the optimal solutions and get the longed for characterizations analyzed in the previous section:\\ \par
By Theorem $2.1$ if $0<argz<\alpha$ and $0>argw>-\beta$ (for a least one coordinate) then there exist solutions $z^*\in S$, $w^*\in T$ such that $Az=b$ and $A^*w=c$. \par
Now assume that the linear problems $\{Az=b$, $0\leqq argz\leqq\alpha\}$ and $\{A^*w=c$, $0\geqq argw\geqq-\beta\}$ have no solutions. Then, again by Theorem 2.1 we instantly obtain that the optimal solutions $\hat{z},\hat{w}$ belong to the boundary of their corresponfing cones, that is $arg\hat{z}\equiv\alpha$ and $arg\hat{w}\equiv -\beta$ (when the optimal solutions are not real numbers). Therefore, since we already know that the optimal solutions exist when both problems are feasible (Theorem 4.1) and that the two linear systems have no solutions in $S,T$, significant information regarding the geometrical location of these optimal solutions is immediately acquired; when they are not real numbers, they lie on the boundary of the argument cones in their corresponding finite-dimensional complex planes. Note that since \textbf{(F1)-(F3)} hold for cones that are not necessarily closed, this example also holds for cones $S$ and $T$ in which their elements might have coordinates whose argument strictly belongs to $(0,\alpha_j)$ (to $(-\beta_i,0)$ respectively).\par
The latter analysis for this case of complex linear problems however does not work if our goal is to apply the second main result. Obviously, this is due to the fact that all elements of $A$ have a positive real part. One should therefore work in a different setup. For instance, one could work with matrix $A$ such that $|arg(a_{ij})|\leqq\frac{\pi}{2}+min\{\alpha,\beta\}_{k^*}$, $\forall i,j$, with $b>0$ and $c\in S^*$ (why?).\\

\par
The geometric characterization of the optimal solutions of dual problems $(P),(D)$ obtained can have further appliances, such as in game theory. For example, if one works with complex matrix games, as they are defined and analyzed in \cite{4}, Theorem 2.1 implies whether or not the players $I$ and $II$ choose optimal strategies that lie on the boundary of their respective compact strategy sets. Of course, as seen from the proof of the minimax theorem in \cite{4}, the closed cones $S,T$ have a stricter geometrical structure, as they are defined by
\begin{equation*}
    S=\{z\in\mathbb{C}^m:|argz|\leqq\alpha,\;\displaystyle\sum_{i=1}^{m}Im(z_i)=0\},\;\;T=\{w\in\mathbb{C}^n:|argw|\leqq\beta,\;\displaystyle\sum_{j=1}^{n}Im(w_j)=0\}
\end{equation*} \par
Therefore the main results can readily be applied in the case of finite-dimensional linear programs under the three original assumptions, where duality theory already exists and combutational methods for the solution of the complex linear systems $Az=b$, $A^*w=c$ are known. These results however may be applied in the case of infinite linear programming problems in complex space as well, where the above linear systems turn out to be harder to solve.\par For instance, consider the space of all square-summable complex sequences $\ell ^2(\mathbb{C})$ with objective function $Re(z,w)=Re(\displaystyle\sum_{i=1}^{\infty}z_i\bar{w_i})$. Let $b,c\in\ell^2(\mathbb{C})$, $A\in\mathcal{M}(\ell^2(\mathbb{C}))$ and the convex cones:
\begin{equation*}
S:=\{z\in\ell^2(\mathbb{C}):|argz|\leqq\alpha\},\;T:=\{w\in\ell^2(\mathbb{C}):|argw|\leqq\beta\}    
\end{equation*}
where $\alpha\in(0,\frac{\pi}{w})e$, $\beta\in(0,\frac{\pi}{2})e$. We assume that the linear operator $A$ and its adjoint $A^*$ are well defined, that is:
\begin{equation*}
\displaystyle\sum_{i=1}^{\infty}\displaystyle\sum_{j=1}^{\infty}{z_i}\bar{a_{ij}}\bar{w_j}=\displaystyle\sum_{i=1}^{\infty}\displaystyle\sum_{j=1}^{\infty}\bar{z_i}a_{ij}w_j<\infty\;\; \forall z\in S,\;\; \forall w\in T  
\end{equation*}
Conditions \textbf{(F1),\;(F2)} and \textbf{(F4)} hold be definition, while the case of \textbf{(F3)} is again more sensitive. However, when working with specific subsets of the feasible cones and certain cases of the linear operators $A,A^*$, such as in the previous example, the third assumption holds by the known theory of minimum attainment (see \cite{9}). Therefore Theorem 2.1 can be applied. Note that solutions of the problems $\{Az=b$, $z\in S\}$ and $\{A^*w=c$, $w\in T\}$ are not so easily found by similar combutational methods as in the finite case (e.g. with Gauss elimination). Hence the results obtained in Section 3 are instantly more significant and useful for the infinite case.\par
Since some regular and strong duality theorems for conic linear problems in infinite-dimensional Hilbert spaces have already been obtained (see for instance \cite{1, 2}), the existence of optimal solutions can result in valuable mathematical details regarding both the geometric properties of such solutions, that is whether or not they lie on the boundary of the cones $S,T$, as well as the existence and form of solutions of the systems $\{Az=b$, $z\in S\}$ and $\{A^*w=c$, $w\in T\}$.\\ \\

\section{Continuous linear programming}
Another interesting application takes place in continuous linear programming. Here problems of the subsequent forms are discussed:

\begin{center}
\begin{equation*}{(P^*)\;\;\;\;\;\;\;\;}
\begin{array}{ll}
    min\;\; \mathop{\mathlarger{\int_0^T}} x(t)c(t)dt \\
    s.t.\;\;x(t)B(t)\geqq b(t)+ \mathop{\mathlarger{\int_t^T}}x(s)K(s,t)ds\;\;\;\;\;\;f.a.e.\;\;t\in[0,T]\\
    x(t)\geqq 0\;\; f.a.e.\;\;t\in[0,T]\\
    \\
    \end{array}
    \end{equation*}
\end{center}
\begin{center}
\begin{equation*}{(D^*)\;\;\;\;\;\;\;\;}
\begin{array}{ll}
    max\;\; \mathop{\mathlarger{\int_0^T}} y(t)b(t)dt \\
    s.t.\;\;B(t)y(t)\leqq c(t)+ \mathop{\mathlarger{\int_0^t}}K(s,t)y(s)ds\;\;\;\;\;\;f.a.e.\;\;t\in[0,T]\\
    y(t)\geqq 0\;\; f.a.e.\;\;t\in[0,T]\\
    \\
    \end{array}
    \end{equation*}
\end{center}
where $c(t),b(t)$ are bounded and Lebesgue measurable m and n-vectors (functions) respectively, $B(t)$ is an $m\times n$ bounded and Lebesgue measurable matrix, $K(s,t)$ is an $m\times n$ bounded and Lebesgue measurable matrix which is equal to 0 for $s>t$, the functions $x(t),y(t)$ are bounded and Lebesgue measurable, $T$ is finite and $f.a.e.$ $t\in[0,T]$ stands for \textit{``for almost every"} $t\in[0,T]$, that is the inequalities that apart the feasible regions hold for every $t\in[0,T]\smallsetminus U$, where $U\subset[0,T]$ has a Lebesgue measure of zero.\par
Problems of the above form have been discussed in \cite{6, 9} and strong duality theorems have been acquired under specific hypotheses. The following algebraic assumptions are always made:\\
\begin{equation}
\{z:B(t)z\leqq0,z\geqq0\}=\{0\}\;\;\forall t\in[0,T]
\end{equation}
\begin{equation}
    B(t)\geqq0,\;K(s,t)\geqq0,\;c(t)\geqq0\;\;\forall s,t\in[0,T]
\end{equation}
while the continuity of these functions (almost everywhere in $[0,T]$) is also required for no duality gap between $(P^*)$ and $(D^*)$. In particular, the following strong duality theorem is known:

\begin{theorem}[\cite{6} Theorem 3.4, \cite{9} Theorem 3] If (5.1) and (5.2) hold and if $B(t),\;b(t),\;c(t),\;K(s,t),\;$ are continuous at almost all $t$ in $[0,T]$ and almost all $s,\;t$ in $[0,T]\times[0,T]$ respectively, then $v(P^*)=v(D^*)$.
\end{theorem}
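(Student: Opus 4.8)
The plan is to establish strong duality by the classical route of discretizing the time interval $[0,T]$ and passing to a limit of finite linear programs, the continuity hypotheses serving to control the limiting behaviour of the constraints and the objectives. First I would record weak duality $v(P^*)\geqq v(D^*)$: given $x$ feasible for $(P^*)$ and $y$ feasible for $(D^*)$, one pairs the primal inequality with the nonnegative multiplier $y(t)$ and the dual inequality with $x(t)$, integrates over $[0,T]$, and uses Fubini together with the causal (Volterra) structure of the kernel $K(s,t)$ (which vanishes for $s>t$) to cancel the two double-integral terms, leaving $\int_0^T x(t)c(t)\,dt\geqq\int_0^T y(t)b(t)\,dt$. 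Note also that $y\equiv 0$ is dual feasible because $c(t)\geqq 0$ by (5.2), so $(D^*)$ is consistent and the problem is nondegenerate.

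Next I would partition $[0,T]$ into $n$ equal subintervals of length $h=T/n$ and replace the integrals by Riemann sums over the resulting grid, obtaining a primal-dual pair of finite-dimensional linear programs $(P_n)$, $(D_n)$ that are exact LP-duals of one another. Assumption (5.2) guarantees that the discretized coefficient matrices inherit the correct sign structure, while assumption (5.1), namely $\{z:B(t)z\leqq 0,\ z\geqq 0\}=\{0\}$ for every $t$, is what forces the feasible region of each $(P_n)$ to be bounded; hence an optimal solution exists and finite-dimensional LP strong duality yields $v(P_n)=v(D_n)$, realized by optimal step functions $x_n$, $y_n$.

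The core of the argument is the passage to the limit, and this is the step I expect to be the main obstacle. Using (5.1) I would extract from the discretization a uniform $L^\infty$ bound on the families $\{x_n\}$ and $\{y_n\}$, independent of $n$; this uniform bound is precisely what the nondegeneracy condition (5.1) buys us, since without it the discrete optima could diverge and no limit would survive. Granted uniform bounds, I would pass to weak-$*$ convergent subsequences in $L^\infty[0,T]$ (or, after a Helly-type selection, to pointwise limits almost everywhere), obtaining $\hat x$, $\hat y$. Here the continuity almost everywhere of $B$, $b$, $c$, $K$ enters decisively: it ensures that the Riemann-sum constraints and objectives converge to their integral counterparts, so that $\hat x$, $\hat y$ are feasible for $(P^*)$, $(D^*)$ and that $v(P_n)\to v(P^*)$, $v(D_n)\to v(D^*)$.

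Combining $v(P_n)=v(D_n)$ with these two convergences then gives $v(P^*)=v(D^*)$, which together with the weak duality of the first step completes the proof. The most delicate technical point, as indicated, is controlling the kernel term under discretization: one must show that the double Riemann sums approximating $\int_0^t K(s,t)y(s)\,ds$ converge uniformly enough that feasibility, and not merely the value, is preserved in the limit, and it is exactly the continuity of $K$ together with its causal vanishing for $s>t$ that makes this estimate go through.
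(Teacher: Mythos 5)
First, a point of comparison: the paper does not prove this theorem at all --- it is quoted as a known result from Grinold \cite{6} and Levinson \cite{9} --- so the relevant benchmark is the proof in those sources. Your discretize-and-pass-to-the-limit plan is in fact the route Levinson's original proof takes (partition of $[0,T]$, finite LP duality on the grid problems, a Helly-type selection, and a.e.\ continuity to control the limit), so the overall strategy is the right one. However, the sketch has a concrete gap in the step you yourself flag as the core of the argument.

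The gap is that you attach the boundedness mechanism to the wrong problem. Condition (5.1) reads $\{z: B(t)z\leqq 0,\ z\geqq 0\}=\{0\}$; the recession cone it annihilates is that of constraints of the form $B(t)y(t)\leqq c(t)+\int_0^t K(s,t)y(s)\,ds$, i.e.\ the $(D^*)$ side, and even there the conclusion is not per-interval: because the bound at time $t$ depends on $\int_0^t K y$, one needs a forward induction (Gr\"onwall-type estimate) along the Volterra coupling, using $K\geqq 0$ bounded, to get a uniform bound on the dual feasible set. For the primal, a recession direction is any $z\geqq 0$ with $z B(t)\geqq \int_t^T z(s)K(s,t)\,ds$; taking $K\equiv 0$ and $B(t)\geqq 0$ (both permitted by (5.2)) this is the \emph{entire} nonnegative orthant, so the feasible region of $(P_n)$ is in general unbounded and your claimed uniform $L^\infty$ bound on $\{x_n\}$ has no justification. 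Consequently the weak-$*$ limit $\hat x$ need not exist, and the primal half of your squeeze collapses. The repair, which is what \cite{9} actually does, is to run existence and LP duality from the dual side ($y\equiv 0$ is feasible since $c\geqq 0$; the dual feasible set is bounded by (5.1) via the induction, so $(D_n)$ attains its optimum and finite-dimensional duality then gives $v(P_n)=v(D_n)$, provided $(P_n)$ is feasible --- another step you omit, since sampling a continuous feasible $x$ at grid points need not satisfy the Riemann-sum constraints and requires an $\varepsilon$-perturbation creating slack), and to replace the convergence of $x_n$ by the approximation argument in the opposite direction: every feasible solution of $(P^*)$ yields a nearly feasible discrete solution with nearby value, which bounds $v(P_n)$ from above by $v(P^*)+\varepsilon$, while the interpolated dual optima furnish feasible points of $(D^*)$ closing the gap from below. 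It is in these two approximation steps, not in a compactness argument for $\{x_n\}$, that the a.e.\ continuity of $B$, $b$, $c$, $K$ is actually spent.
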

\par
 Here we work with the conves cones:
\begin{equation*}
    S:=(L_{\infty}^{+}[0,T])^m,\;\;\;T:=(L_{\infty}^{+}[0,T])^n
\end{equation*}
where $L_{\infty}^{+}[0,T]$ is the space of all almost everywhere positively valued on $[0,T]$ bounded Lebesgue measurable functions. Strong duality results regarding continuous linear problems of similar form with these exact feasible convex cones are given in \cite{15}. \par The linear operators $A:X\rightarrow Y$, $A^T:Y\rightarrow X$ are defined by:
\begin{equation*}
    (Ax)(t):= x(t)B(t)-\mathop{\mathlarger{\int_t^T}}x(s)K(s,t)ds,\;\;t\in[0,T]
\end{equation*}
\begin{equation*}
    (A^Ty)(t):=B(t)y(t)-\mathop{\mathlarger{\int_0^t}}K(s,t)y(s)ds,\;\;t\in[0,T]
\end{equation*}
We can see that these are well defined under our initial hypothesis. Also for the bilinear and symmetric objective function $\langle z(t),w(t)\rangle = \mathop{\mathlarger{\int_0^T}}z(t)w(t)dt$ the following is true: $\langle (Az)(t),w(t)\rangle=\langle z(t),(A^Tw)(t)\rangle$. Indeed, it suffices to show that:\\
\begin{equation*}
\mathop{\mathlarger{\int_0^T}}\left(\mathop{\mathlarger{\int_t^T}}z(s)K(s,t)ds\right)w(t)dt=\mathop{\mathlarger{\int_0^T}}z(t)\left(\mathop{\mathlarger{\int_0^t}}K(s,t)w(s)ds\right)dt
\end{equation*}
This is true by the Fubini Theorem (see \cite{6}, Proposition 1.3).\par
Now \textbf{(F1),(F2),(F4)} hold by definition for almost every $t\in[0,T]$ (it is easy to notice that the \textit{``for almost every"} does not affect the gist of the main results - the reader may fill in the appropriate details). Condition \textbf{(F3)} is the one that discommodes us once again. In contrast to the previous example, it is not wise to work with $L_{\infty}^{+}[0,T]$ (or a subspace $L_{k}^{+}[0,T]$ for some $k\in\mathbb{N}$) in order to prove level-boundness and/or apply Theorems of the attainment of minimum, as it was done previously. This is mainly because the set $\{f(t)\in (L_{\infty}^{+}[0,T])^m: |f(t)|\leqq N, t\in[0,T]\}$ for some $N\in\mathbb{R}$ is not bounded. Therefore we would either have to work in specific subspaces of $L_{\infty}^{+}[0,T]$, that is with specific spaces - types - of bounded Lebesgue measurable functions, or with suitable matrices $K(s,t)$, $B(t)$ and vectors $b(t)$, $c(t)$, as it was effectively done in the previous section.\par
For the main purposes of this section we shall not deal with the analysis of cases under which this condition is met; for the result that follows we will assume that \textbf{(F3)} is true.\\ \par The next Theorem can be characterized as partially complementary to Theorem 5.1:\\

\begin{theorem}
Let $\hat{x}(t)$ be a feasible solution of $(P^*)$ and $\hat{y}(t)$ be a feasible solution of $(D^*)$ such that $\hat{x}(t)>0$ and $\hat{y}(t)>0$ f.a.e.\;\;$t\in[0,T]$. Assume that the optimal values $v(P^*)$, $v(D^*)$ exist and are bounded, assumption (F3) holds and that either $(i)$ or $(ii)$ of the following is satisfied:\\ \\
(i) $B(t)\leqq0$, $K(s,t)\geqq0$ $\forall s,t\in[0,T]$ and $b\in T^*$\\
(ii) $B(t)\geqq0$, $K(s,t)\leqq0$ $\forall s,t\in[0,T]$ and $-c\in S^*$\\ \\
Then there exist optimal solutions $x^*(t)$ of $(P^*)$ and $y^*(t)$ of $(D^*)$ such that $x^*(t)B(t)= b(t)+ \int_t^T x^*(s)K(s,t)ds$ and $B(t)y^*(t)= c(t)+ \int_0^t K(s,t)y^*(s)ds$,\;\;\;f.a.e. $t\in[0,T]$, and $v(P^*)=v(D^*)$.
\end{theorem}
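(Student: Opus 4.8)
The plan is to recognize $(P^*),(D^*)$ as an instance of the conic pair $(P),(D)$ of Section 2, with $S=(L_\infty^+[0,T])^m$, $T=(L_\infty^+[0,T])^n$, the operators $A,A^T$ displayed above, and objective $\langle z,w\rangle=\int_0^T z(t)w(t)\,dt$; the adjoint identity $\langle Az,w\rangle=\langle z,A^Tw\rangle$ holds by Fubini, as already noted. Under this identification the two desired conclusions $x^*(t)B(t)=b(t)+\int_t^T x^*(s)K(s,t)\,ds$ and $B(t)y^*(t)=c(t)+\int_0^t K(s,t)y^*(s)\,ds$ are precisely $Ax^*=b$ and $A^Ty^*=c$. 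Hence the statement is exactly the conclusion of Theorem 2.1 (Corollary 3.7), and it suffices to verify its hypotheses and invoke Theorems 3.1 and 3.2. The framework is in place: \textbf{(F1), (F2), (F4)} hold by definition of the positive cones (as discussed in this section), \textbf{(F3)} is assumed, so the generalized Farkas lemmas (Theorems 3.5, 3.6) and therefore Theorems 3.1, 3.2 are available.

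First I would locate the interior feasible data: since $\hat x(t)>0$ and $\hat y(t)>0$ f.a.e., we have $\hat x\in$ int$S$ and $\hat y\in$ int$T$. Next I would use the sign conditions to trivialize one objective. Assume (i). For any $x\in S$ the inequalities $B\leqq0$, $K\geqq0$ give $(Ax)(t)=x(t)B(t)-\int_t^T x(s)K(s,t)\,ds\leqq0$ f.a.e.; feasibility forces $(Ax)(t)\geqq b(t)$, while $b\in T^*$ gives $b\geqq0$. Therefore $b\equiv0$ and $Ax=0=b$ for every feasible $x$ -- the primal equality is automatic -- and the dual objective $\langle y,b\rangle\equiv0$, so $v(D^*)=0$ and every dual-feasible point, in particular $\hat y$, is optimal. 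Symmetrically, under (ii) one gets $c\equiv0$, $A^Ty=0=c$ for every dual-feasible $y$, and $\hat x$ primal-optimal.

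With this reduction, under (i) the point $\hat x$ already satisfies $A\hat x=b$ (since $b\equiv0$ and $A\hat x=0$), so the primal equality is secured, and it remains to produce $y^*\in\mathcal{S}(D)$ with $A^Ty^*=c$. I would obtain such a $y^*$ from Theorem 3.2, whose hypothesis is an optimal primal solution in int$S$. Granting $y^*$, the identity $\langle c,\hat x\rangle=\langle A^Ty^*,\hat x\rangle=\langle y^*,A\hat x\rangle=\langle y^*,b\rangle$ together with weak duality (2.1) squeezes $v(P^*)=\langle c,\hat x\rangle=\langle y^*,b\rangle=v(D^*)$, thereby certifying $x^*:=\hat x$ and $y^*$ as optimal and giving the zero gap; translating back produces the two pointwise equalities. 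Case (ii) is handled symmetrically, with the roles of $(P^*),(D^*)$ and of Theorems 3.1, 3.2 interchanged.

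The hard part will be verifying the hypothesis of Theorem 3.2 under (i) (resp. Theorem 3.1 under (ii)): the existence of an optimal primal (resp. dual) solution lying in the interior of its cone. The difficulty is a circularity -- the obvious optimizer $x\equiv0$ lies on $\partial S$, and the interior candidate $\hat x$ can be certified optimal only once the dual $y^*$ with $A^Ty^*=c$ is in hand, which is precisely what Theorem 3.2 is meant to deliver. Concretely, dual feasibility of $\hat y$ gives $c-A^T\hat y\in S^*$, whence $\langle c,\hat x\rangle=\langle c-A^T\hat y,\hat x\rangle+\langle\hat y,A\hat x\rangle=\langle c-A^T\hat y,\hat x\rangle\geqq0$, with equality exactly when $c=A^T\hat y$; so $\hat x$ is an interior optimum precisely when the sought dual equality already holds. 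I would break the loop by attacking $\{A^Ty=c,\ y\in T\}$ directly through the second generalized Farkas lemma (Theorem 3.6), ruling out its alternative $\{Ax\in T^*,\ \langle x,c\rangle<0,\ x\in X\}$ using the interior data $\hat x,\hat y$, the sign hypotheses of (i), and the assumed attainment of the optimal values via \textbf{(F3)}. This solvability is the load-bearing step on which the whole argument turns, and it is exactly the place where conditions (i)/(ii) are indispensable.
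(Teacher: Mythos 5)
Your reduction of the primal half is correct and in fact sharper than anything the paper records: under $(i)$, feasibility of $\hat{x}$ together with $B\leqq 0$, $K\geqq 0$ and $b\in T^*$ really does force $b\equiv 0$ and $Ax=b$ for \emph{every} primal-feasible $x$, so $x^*:=\hat{x}$ settles the first equality and every dual-feasible point (in particular $\hat{y}\in$ int$T$) is optimal. The genuine gap is the dual half: the existence of $y^*\in T$ with $A^Ty^*=c$ is never established. You correctly diagnose that Theorem 3.2 is inapplicable --- its hypothesis $\mathcal{S}^*(P)\,\cap$ int$S\neq\O$ cannot be verified, since $\hat{x}$ is interior but not known to be optimal, and the circularity you describe is real --- but your substitute, ruling out the alternative $(II')$ of Theorem 3.6 ``using the interior data, the sign hypotheses of $(i)$, and (F3)'', is only announced, not executed; you yourself flag it as the load-bearing step. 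It is not executable by the complementarity computations you have in hand: $(II')$ quantifies over all $x\in X$, not over $S$, and excluding it is precisely the content of the paper's Theorem 3.9, whose (omitted) proof needs the cone assumption \textbf{(F4)}, the scaling argument of Theorem 3.8, and the nonemptiness hypotheses $\hat{\mathcal{S}}(P)\neq\O$, $\tilde{\mathcal{S}}(P)\smallsetminus\hat{\mathcal{S}}(P)\neq\O$ --- none of which appear in your plan.

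The paper's proof bypasses the interior-optimal-solution route entirely and runs through the \textbf{(F4)}-based pair Theorems 3.8 and 3.9, whose hypotheses ask only for feasible interior points satisfying an extra conic condition. Under $(i)$: $B\leqq 0$ and $K\geqq 0$ make $-A^Ty\in S^*$ automatic for every $y\in T$, so $\hat{y}\in\hat{\mathcal{S}}(D^*)$ and Theorem 3.8 yields $x^*$ with $Ax^*=b$; and $b\in T^*$ gives $A\hat{x}=(A\hat{x}-b)+b\in T^*$ (with your own observation this is even immediate, since $A\hat{x}=0\in T^*$), so $\hat{x}\in\hat{\mathcal{S}}(P^*)$ and Theorem 3.9 yields $y^*$ with $A^Ty^*=c$, after which strong duality follows exactly as in your squeeze $\langle c,\hat{x}\rangle=\langle y^*,A\hat{x}\rangle$ combined with (2.1). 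So the repair of your argument is one step away from what you wrote: instead of Theorem 3.2 or a bare appeal to Theorem 3.6, verify $\hat{x}\in\hat{\mathcal{S}}(P^*)$ --- your computation $A\hat{x}=0$ already does this --- and invoke Theorem 3.9, which is exactly the alternative-ruling-out machinery you were trying to rebuild by hand.
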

\begin{proof}
    Assume that the first condition $(i)$ is met (the proof for condition $(ii)$ is completely analogous). Then, since $B(t)\leqq0$ and $K(s,t)\geqq0$ for $s,t\in[0,T]$, the set $\hat{\mathcal{S}}(D^*)=\{y\in$ int$T:-A^Ty+c\in S^*,-A^Ty\in S^*,\langle y,b\rangle>-\infty\}$, where the linear operator $A^T$ is defined as above, is nonempty for almost every $t\in[0,T]$. Therefore, by Theorem 3.8 there exists $x^*(t)\in\mathcal{S}(P^*)$ such that $Ax^*=b$, that is $x^*(t)B(t)= b(t)+ \int_t^T x^*(s)K(s,t)ds$ for almost every $t\in[0,T]$. Now, since $b\in T^*$, Theorem 3.9 may also be applied, hence there also exists $y^*(t)\in\mathcal{S}(D^*)$ such that $A^Ty^*=c$, that is $B(t)y^*(t)= c(t)+ \int_0^t K(s,t)y^*(s)ds$,\;\;\;f.a.e. $t\in[0,T]$. The strong duality relation $v(P^*)=v(D^*)$ follows.
\end{proof}
Note that for the proof the continuity of $B(t),\;b(t),\;c(t),\;K(s,t)$ is not assumed in comparison to Theorem 5.1, although it may be (partially) needed in order for \textbf{(F3)} to hold.

\bibliographystyle{plain}

\end{document}